\def\RCAo{\mathsf{RCA_0}}
\def\RCA{\mathsf{RCA_0}}
\def\ACA{\mathsf{ACA_0}}
\def\N{\mathbb{N}}
\def\P2{\Pi^1_2}
\def\RT{\mathrm{RT}}
\newcounter{menum}
{\begin{enumerate}%
\setcounter{enumi}{#1}}%
{\setcounter{menum}{\value{enumi}}\end{enumerate}}
\newtheorem{thm}{Theorem}[section]
\newtheorem{theorem}[thm]{Theorem}
\newtheorem{claim}{Claim}[thm]
\newtheorem*{claim*}{Claim}
\newtheorem{proposition}[thm]{Proposition}
\newtheorem{lemma}[thm]{Lemma}
\theoremstyle{definition}
\newtheorem{defi}{Definition}[section]
\newtheorem{definition}[defi]{Definition}
\newtheorem{remark}[thm]{Remark}
\newtheorem{example}[defi]{Example}
\def\RCA{\mathsf{RCA_0}}
\newcommand{\is}[1]{\mathsf{I}\Sigma^0_{#1}}
\newcommand{\bs}[1]{\mathsf{B}\Sigma^0_{#1}}
\newcommand{\clG} {{\sf cl}(\mathcal{G})}
\def\RT{\mathrm{RT}}
\newcommand{\arcd}[2] {#1\xrightarrow{\downarrow} #2}
\newcommand{\arce}[2] {#1\xrightarrow{\Downarrow} #2}
\newcommand{\mboxb}[1]{\mbox{ \textbf{#1} }}
\newcommand{\mbb}[1]{\mbox{ \textbf{#1} }}
\newcommand{\mb}{\mathbf}
\newcommand{\mcl}{\mathcal}
\newcommand{\last}{\mathrm{last}}
\newcommand\STAR{\triangle\text{-}\mathrm{RT}}
\newcommand{\SPP}{\mathrm{sRT}^1}
\newcommand\NN{\mathbb{N}}
\def\RCAo{\mathsf{RCA_0}}
\newcommand{\var}{\mathrm{Par}}
\newcommand\fun{\mathrm{Fun}}
\newcommand\arity{\mathrm{ar}}
\newcommand\aexp{\mathrm{AExp}}
\newcommand\bexp{\mathrm{BExp}}
\newcommand\eexp{\mathrm{Exp}}
\newcommand\ddef{\mathrm{Def}}
\newcommand\prog{\mathrm{Prog}}
\newcommand\op{\mathrm{Op}}
\newcommand{\bp}[1]{\left\lbrace #1 \right\rbrace}
\def\ACA{\mathsf{ACA_0}}
\title{The strength of the SCT criterion}
\author{Emanuele Frittation
	\and Silvia Steila \and Keita Yokoyama}
\date{}
\begin{document}

\maketitle

\begin{abstract}
We undertake the study of size-change analysis in the context of Reverse Mathematics. In particular, we prove that the SCT criterion \cite[Theorem 4]{Jones1} is equivalent to $\is2$ over $\RCA$. 
\end{abstract}


\section{Introduction}
Ramsey's theorem for pairs ($\RT^2$) is one of the main characters in Reverse Mathematics. It states that for any natural number $k$ and for any edge coloring of the complete graph with countably many nodes in $k$-many colors, there exists an infinite homogeneous set, i.e. there exists an infinite subset of  nodes whose any two elements are connected with the same color \cite{Ramsey}.

As highlighted by Gasarch \cite{Gasarch}, Ramsey's theorem for pairs can be used to prove termination. For instance, Podelski and Rybalchenko characterized the termination of transition based programs as a property of well-founded relations by using Ramsey's theorem for pairs \cite{Podelski}. In \cite{RMBound} we started investigating the termination analysis from the point of view of Reverse Mathematics.  We proved the equivalence between the termination theorem of Podelski and Rybalchenko and a corollary of Ramsey's theorem for pairs, which is weaker than Ramsey's theorem for pairs itself. 

The termination theorem is not the only result which characterizes the termination of some class of programs. 
In \cite{Jones1} Lee, Jones and Ben-Amram introduced the notion of size-change termination (SCT) for first order functional programs. Size-change analysis is a general method for \emph{automated termination proofs}. In fact, this method has been applied in the termination analysis of  higher-order programs \cite{JonesB04}, logic programs \cite{Codish}, and  term rewrite systems \cite{Thiemann}. 

Informally, a program is size-change terminating (SCT) if every infinite state transition sequence
would cause an infinite sequence of data values  which is weakly decreasing and strictly decreasing infinitely many times. If the domain of data values is well-founded, such as the natural numbers, there cannot be such a sequence, thus SCT is a sufficient condition for termination  \cite[Theorem 1]{Jones1}.  

Size-change termination is based on the notion of size-change graph (see Subsection \ref{graph}). 
Given a first order functional program $P$ we associate to every call $f \to g$ a bipartite graph which describes the relation between source and target parameter values. These graphs are called size-change graphs. 

In this paper we start the investigation of size-change termination in the framework of Reverse Mathematics. In particular, we analyse the following criterion  for testing SCT \cite[Theorem 4]{Jones1}:

\begin{theorem}[SCT criterion]\label{sct criterion}
	Let $\mathcal{G}$ be a set of size-change graphs for a first order functional program $P$. Then $\mcl G$ is SCT iff every idempotent $G\in{\sf cl}(\mathcal{G})$ has an arc $\arcd{x}{x}$. 
\end{theorem}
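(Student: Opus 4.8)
The plan is to establish both directions of the biconditional, along the lines of Lee--Jones--Ben-Amram but keeping track of the set-existence and induction principles used, since pinning these down to $\is2$ is the point of the paper. Throughout I use that only finitely many size-change graphs live on the finitely many parameters occurring in $P$, so $\clG$ is a finite set --- indeed computable from $\mcl{G}$ --- and a finite partial semigroup under composition; and that ``$\mcl G$ is SCT'' means that every infinite multipath $(G_i)_{i\in\N}$ over $\mcl G$ consistent with the call structure of $P$ has an infinite thread containing infinitely many $\downarrow$-arcs.

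For the implication ``SCT $\Rightarrow$ every idempotent of $\clG$ has an arc $\arcd{x}{x}$'', I argue by contraposition. Let $G\in\clG$ be idempotent with no arc $\arcd{x}{x}$, and fix a finite call path $\pi$ composing to $G$; since $G$ is idempotent its source and target coincide, so $\pi$ is a cycle in the call graph and $M=\pi\conc\pi\conc\cdots$ is a legitimate infinite multipath. Suppose $t$ were a thread for $M$ with infinitely many $\downarrow$-arcs, and consider the parameter $t$ selects at each checkpoint $0,|\pi|,2|\pi|,\dots$; since $f$ has only finitely many parameters, some parameter $x$ is selected at infinitely many checkpoints (an appeal to $\RT^1_{<\infty}$, equivalently $\bs2$). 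Between two consecutive such checkpoints, $t$ restricts to a thread from $x$ to $x$ through a block of $k$ copies of $\pi$, whose composition is $G^k=G$; were that restriction to contain a $\downarrow$-arc, $G$ would contain $\arcd{x}{x}$, a contradiction. Hence $t$ has no $\downarrow$-arc past the first such checkpoint, so it is not good, and $M$ witnesses that $\mcl G$ is not SCT.

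For the converse, assume every idempotent of $\clG$ carries an arc $\arcd{x}{x}$ and let $M=(G_i)_{i\in\N}$ be an infinite multipath. Colour $[\N]^2$ by $c(i,j) = G_i ; G_{i+1} ; \cdots ; G_{j-1}\in\clG$; this is an \emph{additive} colouring into the finite semigroup $\clG$, i.e.\ $c(i,k)=c(i,j)\,;\,c(j,k)$. Applying a Ramsey-type theorem for such colourings yields an infinite set $H=\{h_0<h_1<\cdots\}$ on which $c$ is constantly some $G$; additivity forces $G=G\,;\,G$, so $G$ is idempotent and by hypothesis has an arc $\arcd{x}{x}$. By definition of graph composition, each equality $c(h_i,h_{i+1})=G$ unpacks into a thread from $x$ to $x$ along the segment $(G_{h_i},\dots,G_{h_{i+1}-1})$ containing a $\downarrow$-arc; concatenating these segment-threads produces an infinite thread for $M$ starting at $h_0$ with infinitely many $\downarrow$-arcs. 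Hence $\mcl G$ is SCT.

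The crux, and the reason the statement is non-trivial for Reverse Mathematics, is that the converse as written spends $\RT^2_{<\infty}$, which over $\RCA$ is strictly stronger than $\is2$. So the real work is (i) to isolate the weaker ``additive Ramsey theorem for finite semigroups'' actually used --- every additive colouring of $[\N]^2$ into a finite semigroup is constantly equal to an idempotent on some infinite set --- and (ii) to prove \emph{that} principle in $\is2$. For (ii) I expect a direct construction: thin $\N$ by stabilising, via a $\bs2/\is2$ argument, the finitely many possible ``right profiles'' $\{\,c(n,m):m>n\,\}\subseteq\clG$, then pass to an idempotent power inside the finite semigroup along the thinned set. The matching lower bound --- coding a failure of $\Sigma^0_2$-induction into a program $P$ and graph set $\mcl G$ falsifying the criterion --- is the remaining ingredient for the full equivalence and is handled separately; I expect that reversal, rather than the sufficiency proof, to be the technically most delicate part.
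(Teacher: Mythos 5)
Your overall architecture is sound, but you take a genuinely different route from the paper on the substantive direction, and the comparison is worth making explicit. The paper does not use an additive/Ramseyan-factorisation Ramsey theorem. Instead it isolates a ``triangle'' consequence of $\RT^2$, called $\STAR$: for every colouring of $[\N]^2$ in $k$ colours there are a colour $G$ and a single apex $t$ such that infinitely many pairs $m<l$ form a $G$-monochromatic triangle with $t$. Applied to your colouring $c(i,j)=G_i;\dots;G_{j-1}$, additivity already forces $G$ to be idempotent ($G;G=c(t,m);c(m,l)=c(t,l)=G$), and the shared apex substitutes for full homogeneity: one extracts nested triangle feet $t<g_0(n)<g_1(n)<g_0(n+1)$ and verifies by hand that $\arcd{x}{x}\in c(g_0(n),g_0(n+1))$, which glues the per-segment threads into an infinite descent. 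The paper then derives $\STAR$ from $\is2$ not by a direct construction but by conservation: $\RT^2\Rightarrow\STAR$, $\RT^2$ is $\Pi^1_1$-conservative over $\bs3$, and $\bs3$ is conservative over $\is2$ for statements of the syntactic complexity of $\STAR$. Your route buys the clean Lee--Jones--Ben-Amram descent construction on a genuinely homogeneous set, at the price of proving the additive Ramsey theorem inside $\is2$; the paper's route buys an almost free derivation from known conservation results, at the price of a fiddlier thread-gluing argument.

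The one genuine gap is precisely the step you flag as ``the real work'': you assert, but do not prove, that the additive Ramsey theorem for finite semigroups holds in $\RCA+\is2$, and the sketch (``stabilise the right profiles, then pass to an idempotent power'') is not yet a proof --- the stabilisation requires a careful $\Sigma^0_2$-induction over a possibly nonstandard finite semigroup, and it is not immediate that passing to an idempotent power along the thinned set yields homogeneity of \emph{all} pairs rather than only consecutive ones. The principle is indeed equivalent to $\is2$ over $\RCA$ (it is the combinatorial core of the reverse mathematics of B\"uchi's theorem), so your proof is correct conditional on supplying that argument, but as written it is deferred. Two smaller points: your contrapositive for the easy direction appeals to $\RT^1$ (hence $\bs2$) to find a parameter recurring at infinitely many checkpoints, whereas the paper proves that direction outright in $\RCA$ by running the assumed descent on $M=G,G,\dots$ and applying only the \emph{finite} pigeonhole principle to the first $|\var(f)|+1$ parameters visited; if you want that direction unconditionally in the base theory you should argue directly rather than by contraposition. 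On the other hand, your care in unfolding an idempotent $G\in\clG$ into a cycle of graphs from $\mcl G$ before building the multipath is a point where your write-up is more scrupulous than the paper's.
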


The original proof of the SCT criterion is based on Ramsey's theorem for pairs. In this paper we show that this is far from optimal and pinpoint the exact strength of the SCT criterion from the point of view of Reverse Mathematics. For our analysis we consider the following version, where we consider size-change graphs only. 

\begin{theorem}[SCT criterion for graphs]
	Let $\mathcal{G}$ be a set of size-change graphs.  Then $\mathcal{G}$ is SCT iff every idempotent $G\in{\sf cl}(\mathcal{G})$ has an arc $\arcd{x}{x}$. 
\end{theorem}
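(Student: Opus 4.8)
The plan is to prove the two implications separately, working in $\RCA$ together with $\is2$, and to isolate an additive Ramsey principle as the one place where induction beyond $\RCA$ is used. For the direction ``if $\mcl{G}$ is SCT then every idempotent of $\clG$ has a strict self-loop'' I argue contrapositively and essentially within $\RCA$. Suppose $G\in\clG$ is idempotent and has no arc $\arcd{x}{x}$, and fix a witness of $G\in\clG$, i.e.\ a finite list $\vec H = H_0,\dots,H_{m-1}$ of graphs of $\mcl{G}$ whose composition is $G$; let $M$ be the infinite multipath obtained by repeating $\vec H$. Associativity of composition and idempotency of $G$ give, by a routine induction, that the composite of $M$ along any interval $[pm,qm]$ with $p<q$ equals $G^{\,q-p}=G$. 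Assume for contradiction that $M$ has a thread $\sigma$ with infinitely many strict arcs. Restricting $\sigma$ to the block boundaries (multiples of $m$) in its domain yields an infinite sequence of parameters of the source function of $G$, so by the finite pigeonhole principle — available from $\bs2$, hence under $\is2$ — some parameter $x$ occurs at infinitely many of these boundaries. Between two consecutive occurrences the composite is $G$ and $\sigma$ realizes an arc $x\to x$ in $G$; since $G$ has no strict self-loop this arc is weak, so that stretch of $\sigma$ contains no strict arc. As these stretches are cofinal, $\sigma$ has only finitely many strict arcs — a contradiction. Hence $M$ witnesses that $\mcl{G}$ is not SCT.

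For the converse, ``if every idempotent of $\clG$ has a strict self-loop then $\mcl{G}$ is SCT'', fix an arbitrary multipath $M=(G_i)_{i\in\N}$ and write $G_{[i,j]} := G_i;\dots;G_{j-1}$. Let $S$ be the finite semigroup of size-change graphs on the parameter sets occurring in $M$ under composition, and colour $[\N]^2$ by $c(i,j)=G_{[i,j]}\in S$. Associativity makes $c$ \emph{additive}: $c(i,k)=c(i,j)\cdot c(j,k)$ for all $i<j<k$. The heart of the proof is the additive Ramsey theorem for pairs: every additive colouring of $[\N]^2$ into a finite semigroup admits an infinite set $H=\{h_0<h_1<\cdots\}$ and an idempotent $e\in S$ with $c(h_p,h_{p+1})=e$ for all $p$ (and then $c(h_p,h_q)=e$ for all $p<q$, by additivity and idempotency). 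Granting this, $e=G_{[h_0,h_1]}$ belongs to $\clG$, so by hypothesis it carries an arc $\arcd{x}{x}$; hence each block $[h_p,h_{p+1}]$ of $M$ carries a thread from $x$ to $x$ that uses at least one strict arc. Taking, say, the least-coded such thread in each block and concatenating along the common endpoint $x$ yields — uniformly from $H$, so in $\RCA$ — a thread of $M$, defined from $h_0$ onward, with a strict arc in each of the infinitely many blocks, hence with infinitely many strict arcs. Since $M$ was arbitrary, $\mcl{G}$ is SCT.

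The crux — and the reason $\is2$ appears — is the additive Ramsey theorem: the argument in \cite[Theorem 4]{Jones1} extracts $H$ from the finite-colour Ramsey theorem for pairs $\RT^2_{<\infty}$, which is strictly stronger than $\is2$ over $\RCA$, so the improvement must exploit additivity. I would prove the additive Ramsey theorem in $\RCA+\is2$ along the usual lines, made proof-theoretically careful: for each $i$ some value of $j\mapsto c(i,j)$ is attained infinitely often, additivity forces the family of such ``tail values'' to be closed under the relevant products, iterating inside the finite semigroup produces a tail value that is idempotent, and $H$ is then assembled from it by a nested sequence of pigeonhole choices — controlling this nesting, essentially taking the limit of a $\Delta^0_2$-indexed family of choices, is exactly what needs $\Sigma^0_2$-induction rather than mere $\Sigma^0_2$-collection. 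I expect this calibration, and not the thread combinatorics, to be where all the difficulty lies; its optimality is confirmed by the matching reversal — that the SCT criterion implies $\is2$ — proved by coding a failure of $\is2$ into a set $\mcl{G}$ of graphs whose closure contains only idempotents with a strict self-loop yet which admits a multipath with no infinitely-often-strict thread.
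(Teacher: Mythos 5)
Your argument is correct in outline but takes a genuinely different route from the paper's on the substantive direction. The paper isolates a bespoke consequence of $\RT^2$, the triangle Ramsey principle $\STAR$ (infinitely many monochromatic triangles through a fixed vertex $t$ in a fixed color), derives the SCT criterion from $\STAR$ using the same coloring $c(i,j)=G_i;\dots;G_{j-1}$ that you use, and then obtains $\STAR$ from $\is2$ indirectly, via two conservation results ($\RT^2$ is $\Pi^1_1$-conservative over $\bs3$, and $\bs3$ is conservative over $\is2$ for $\widetilde{\Pi}^0_4$ sentences). You instead invoke the additive (Ramseyan factorization) Ramsey theorem for finite semigroups; this buys a fully homogeneous set with an idempotent color, so the infinite descent falls out by concatenating one strict block-thread per homogeneous block, whereas $\STAR$ only supplies triangles through $t$ and the paper needs the small extra argument that $\arcd{x}{x}\in c(g_0(n),g_0(n+1))$. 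Both routes thus outsource the real work to a Ramsey-type principle of exactly $\is2$ strength: the paper to $\STAR$ via cited conservation theorems, you to additive Ramsey, which is indeed provable from (and equivalent to) $\is2$ over $\RCA$ — but your sketch of that proof is the thinnest part of the proposal, and it is precisely where all the $\Sigma^0_2$-induction bookkeeping lives, so as written the proposal is not self-contained there. Two smaller points: composition of size-change graphs is a partial operation (a finite category, not a semigroup), so you should either adjoin an absorbing element or state additive Ramsey for additive labelings, which is harmless; and your contrapositive treatment of the easy direction uses the infinite pigeonhole $\RT^1_k$ (hence $\bs2$) to find a parameter recurring cofinally at block boundaries, whereas the paper's direct argument — extract successive strict composite arcs $\arcd{x_i}{x_{i+1}}\in G$ from the descent, then apply the finite pigeonhole and idempotence — stays inside $\RCA$, which matters because the paper records that direction as a theorem of $\RCA$ alone.
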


To the aim of studying the strength of the SCT criterion we introduce and study a corollary of Ramsey's theorem for pairs, called Triangle Ramsey's theorem ($\STAR$).  It states that for any natural number $k$ and for any edge coloring of the complete graph with countably many nodes in $k$-many colors, there is some node which is, for some color $i \in k$, the first node of infinitely many triangles homogeneous in color $i$. As far as we know this corollary does not appear in the literature. 

We show that $\STAR$ implies the SCT criterion and that the SCT criterion implies the Strong Pigeonhole Principle $(\SPP)$. From these (and some further) results we are able to conclude that both  SCT criterion  and $\STAR$ are equivalent to $\Sigma^0_2$-induction ($\is2$).

\begin{theorem}[$\RCA$] The following are equivalent:
	\begin{enumerate}
		\item $\is2$
		\item $\STAR$
		\item SCT criterion
	\end{enumerate}
\end{theorem}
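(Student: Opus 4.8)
The plan is to prove the cycle of implications
\[
\is2 \;\Longrightarrow\; \STAR \;\Longrightarrow\; \text{SCT criterion} \;\Longrightarrow\; \SPP \;\Longrightarrow\; \is2,
\]
working throughout over $\RCAo$. The last implication should already be known — $\SPP$, the strong pigeonhole principle for $1$-colorings asserting the existence of an infinite homogeneous set for any $k$-coloring of $\N$, is a standard equivalent of $\is2$ (it is essentially $\RT^1$ with the extra ``infinitely often'' clause, going back to Hirst) — so I would cite that and concentrate on the three new arrows. For $\is2 \Rightarrow \STAR$: given a $k$-coloring $c$ of pairs, the idea is that $\is2$ (equivalently $\bs2$) gives enough bounding/induction to run a pigeonhole argument not on an infinite homogeneous \emph{set} but on the much weaker combinatorial object $\STAR$ demands, namely a single node $x$ and a color $i$ such that $x$ starts infinitely many $i$-monochromatic triangles. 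Concretely, for each node $n$ and color $i$ let $T_i(n)$ be the set of pairs $\{a,b\}$ with $n<a<b$ and $c(n,a)=c(n,b)=c(a,b)=i$; I want some $n$ and $i$ with $T_i(n)$ infinite. Assuming not, each $T_i(n)$ is finite, and a $\Sigma^0_2$-induction / $\bs2$ argument on the (uniformly $\Delta^0_2$, hence under $\is2$ manageable) bounding function $n\mapsto \max\{\text{max of }T_i(n): i<k\}$ should yield a contradiction with the infinitude of $\N$ — this is the kind of argument where the strength of $\is2$ is exactly what is needed, so I expect the bookkeeping here to be the first genuine obstacle.

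For $\STAR \Rightarrow$ SCT criterion (the nontrivial, forward direction: if $\mcl G$ is not SCT we must produce an idempotent $G\in\clG$ with no arc $\arcd{x}{x}$): suppose $\mcl G$ is not SCT, so there is an infinite multipath $G_0,G_1,\dots$ over $\mcl G$ whose composition is not eventually ``descending'' — i.e. there is no parameter thread that decreases infinitely often. For $n\le m$ write $G_{n,m}=G_n;G_{n+1};\cdots;G_{m-1}\in\clG$; colour the pair $\{n,m\}$ by the isomorphism type of $G_{n,m}$ (there are only finitely many graphs in $\clG$, so this is a finite colouring). Apply $\STAR$ to get a node $n_0$ and a colour, i.e. a fixed graph $G^\ast\in\clG$, such that $G_{n_0,m}=G^\ast$ for infinitely many $m$; picking two such $m<m'$ and composing gives $G^\ast = G_{n_0,m'} = G_{n_0,m};G_{m,m'}$, and iterating/using the triangle structure that $\STAR$ actually provides (three indices $n_0<m<m'$ all giving $G^\ast$ on each of the three pairs) forces $G^\ast;G^\ast = G^\ast$, so $G^\ast$ is idempotent. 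Finally, if $G^\ast$ had an arc $\arcd{x}{x}$ one could trace a descending thread through the corresponding infinite sub-multipath, contradicting the choice of the bad multipath; hence $G^\ast$ is the required idempotent with no in-place arc. The converse direction of the criterion — idempotents with a self-arc block infinite descent — is the soft direction and needs only $\RCAo$ plus the basic closure properties of $\clG$ established in Subsection~\ref{graph}.

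For SCT criterion $\Rightarrow \SPP$: I would reverse-engineer a family of size-change graphs from a given $k$-coloring. Given $c:\N\to k$, build, over a single function symbol with $k$ parameters $x_0,\dots,x_{k-1}$, size-change graphs $G_0,\dots,G_{k-1}$ where $G_i$ has strict-descent arcs arranged so that $G_i$ is idempotent, $G_i$ has a self-arc iff colour $i$ is used infinitely often, and the closure of $\{G_i : i<k\}$ contains no \emph{other} idempotents without self-arcs; then feeding the program whose calls realise exactly the colour sequence $c$ into the SCT criterion, SCT-ness fails (the run is genuinely infinite and non-descending when some colour repeats forever) unless some $G_i$ has a self-arc, which pins down an infinitely-recurring colour — exactly $\SPP$. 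The delicate point is making the encoding ``rigid'' enough that the criterion cannot be satisfied by accident via a spurious idempotent, and making the non-termination of the encoded program provable in $\RCAo$; I expect this encoding, together with the $\bs2$-style counting in the first implication, to be where most of the real work lies, whereas the two reverse/soft directions and the citation $\SPP\Leftrightarrow\is2$ are routine.
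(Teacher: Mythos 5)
Your overall architecture --- the cycle $\is2 \Rightarrow \STAR \Rightarrow \text{SCT criterion} \Rightarrow \SPP \Rightarrow \is2$ --- is exactly the paper's, and your middle arrow ($\STAR$ implies the SCT criterion via the coloring $c(i,j)=G_i;\dots;G_{j-1}$, extracting an idempotent $G$ from a triangle and tracing the descent through the shared apex) matches the paper's argument. But two of your arrows have genuine gaps.

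First, $\is2\Rightarrow\STAR$. Your plan --- assume every $T_i(n)$ is finite and derive a contradiction from a bounding function --- does not work as stated: the hypothesis that all $T_i(n)$ are finite, together with the totality of $n\mapsto\max_i\max T_i(n)$, is perfectly consistent with everything else you have on the table and contradicts nothing about the infinitude of $\N$. The combinatorial content you actually need is the nested pigeonhole argument (fix a color $i_0$ occurring infinitely often among $\{c(0,x)\}$; either infinitely many pairs inside that fiber get color $i_0$, giving the triangles, or pass to a sub-coloring with one fewer color and recurse), and for nonstandard $k$ internalizing that recursion is precisely the hard part: the induction hypothesis quantifies over the $\Delta^0_2$-definable sets built at earlier stages and is not obviously within reach of $\is2$. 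The paper sidesteps the issue entirely by conservation: $\RT^2$ trivially implies $\STAR$; $\RT^2$ is $\Pi^1_1$-conservative over $\bs3$; $\bs3$ is $\widetilde{\Pi}^0_4$-conservative over $\is2$; and $\STAR$ is $\widetilde{\Pi}^0_4$. You should either adopt that route or supply the internalized induction in full.

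Second, SCT criterion $\Rightarrow\SPP$. Your encoding (one graph $G_i$ and one parameter $x_i$ per color, with ``$G_i$ has a self-arc iff colour $i$ is used infinitely often'') is not well-formed: the graphs are fixed finite objects and cannot depend on a $\Pi^0_2$ property of the coloring. Under the natural repair (multipath $G_{c(0)},G_{c(1)},\dots$, an infinite descent in $x_i$ witnessing that color $i$ recurs), the criterion hands you a \emph{single} infinitely recurring color, i.e.\ $\RT^1_k$, which is only $\bs2$. $\SPP$ asks for the exact set $I$ of \emph{all} infinitely recurring colors, and extracting that set from one infinite descent is the real difficulty of this reversal. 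The paper's construction introduces one parameter $z_I$ for each nonempty $I\subseteq k$ and graphs $G_{\chi,i}$ indexed by choice functions $\chi$ and colors $i$, updated along the run so that the parameter carrying the infinite descent is exactly the set of recurrent colors (its $k=2$ warm-up already needs a third graph detecting color changes). Without machinery of that sort your third arrow proves only $\RT^1$, which is strictly weaker than $\is2$, and the cycle does not close.
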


\subsection{Notation}    

Given a set $X\subseteq\N$, let  $[X]^2$ denote the set of $2$-element subsets of $X$. As usual, we identify $[X]^2$ with the set $\{(x,y)\colon x,y \in X \land x<y\}$. 
We also identify a natural number $k$  with the set $\{0,\ \dots,\ k-1\}$.  For $k \in \NN$,  we call a function  $c:[\NN]^2 \to k$ a \emph{coloring} of $[\NN]^2$ in $k$-many colors. 

For a set $X\subseteq\N$, 
$X^{<\NN}$ denotes the set of finite sequences of elements in $X$. 
Given a set $X$ and a sequence $\sigma \in X^{<\NN}$  we denote by $|\sigma|$ the length of the sequence, by $\last(\sigma)$ the last element of the sequence and by $\sigma(i)$ the $i$-th element of the sequence,  if it exists. Note that $k^{< \NN}$ is the set of finite sequences of natural numbers less than $k$.

\subsection{Reverse Mathematics}

Reverse Mathematics is a program in mathematical logic introduced by Harvey Friedman in \cite{Friedman75}, which stems from the following question. Given a theorem of ordinary mathematics, what is the weakest subsystem of second order arithmetic in which it is provable?

Amongst the several subsystems of second order arithmetic (see \cite{SOSOA} for a detailed description), in this paper we consider only few  extensions of $\RCA$ (Recursive Comprehension Axiom). $\RCA$ is the standard base system of Reverse Mathematics.
It consists of the usual axioms of first order arithmetic for $0,1,+,\times,<$, induction for $\Sigma^0_1$-formulas ($\is1$) and comprehension for $\Delta^0_1$-formulas.  

The infinite pigeonhole principle ($\RT^1$) and Ramsey's theorem for pairs ($\RT^2$) are defined as follows. 
\begin{description}
	\item[($\RT^1_k$)] For any $c\colon\N\to k$ there exists $i<k$ such that $c(x)=i$ for infinitely many $x$.
	\item[($\RT^1$)] $\forall k\in\N\ \RT^1_k$. \smallskip
	\item[($\RT^2_k$)] For any $ c: [\NN]^2 \to k$ there exists an infinite homogeneous set $X \subseteq \NN$, that is 
	$c\restriction [X]^2$ is constant.
	\item[($\RT^2$)] $\forall k\in\N\ \RT^2_k$.
\end{description}

Let $\is2$ be induction for $\Sigma^0_2$-formulas. It is known that  $\RT^2$ implies the bounding principle for $\Sigma^0_3$-formulas ($\bs3$) over $\RCA$  \cite{Hirst87}, and so in particular $\is2$. As a side result here we provide a different proof of the fact that $\RT^2$ implies $\is2$. Indeed we introduce an immediate consequence of $\RT^2$, the Triangle Ramsey's theorem ($\STAR$), which turns out to be equivalent to $\is2$.


\begin{description}
	\item[($\STAR_k$)] For any coloring $c: [\NN]^2 \to k$ there exist $i\in k$ and $t\in\N$ such that $c(t, m) = c(t, l) = c(m, l) = i$ for infinitely many pairs $m<l$.
	\item[($\STAR$)]$\forall k\in\N \ \STAR_k$
\end{description}

\section{The SCT framework}

In this section we describe the size-change method for first order functional programs as in \cite{Jones1}. All the definitions are made in $\RCAo$ except for the semantic notion of \emph{safety}. 

\subsection{Syntax}

We consider the following basic first order functional language:

\begin{align*}
x\in \var  & \  \text{ parameter identifier}\\
f\in \fun   & \ \text{ function identifier}\\
o\in \op   & \ \text{ primitive operator}\\
a\in \aexp  & \ \text{ arithmetic expression}  \\
&::= \ x \mid x+1 \mid x-1 \mid o(a,\ldots,a) \mid f(a,\ldots,a) \\
b \in \bexp   & \ \text{ boolean expression}       \\
&::= \  x=0 \mid x=1 \mid x<y \mid x\leq y \mid b\land b \mid b\lor b \mid \neg b \\
e \in \eexp & \ \text{ expression} \\ 
&::= \  a \mid \mboxb{if } b \mboxb{ then } e \mboxb{ else } e \\
d \in \ddef &  \ \text{ function definition} \\
&::=\ f(x_0,\ldots,x_{n-1})= e \\
P\in \prog & \ \text{ program} \\
&::= \ d_0,\ldots,d_{m-1} 
\end{align*}

\begin{remark}
	This language is Turing complete.
\end{remark}

A program $P$ is a list of finitely many  defining equations  $f(x_0,\ldots,x_{n-1})=e^f$, where $f\in\fun$ and $e^f$ is an expression, called the \emph{body} of $f$.   Let  $x_0,\ldots,x_{n-1}$ be the \emph{parameters} of $f$, denoted $\var(f)$, and let $n$ be the \emph{arity} of $f$, denoted  $\arity(f)$. 

By $\fun(P)$ we denote the set of functions of $P$. 
We also assume that a program $P$ specifies an \emph{initial} function $f\in\fun(P)$. The idea is that  $P$ computes the (partial) function $f:\N^{\arity(f)}\to\N$.

In \cite{Jones1} the expression evaluation is based on a \emph{left-to-right call-by-value} strategy  given by \emph{denotational semantics}. $\RCA$ is not capable to formalize denotational semantics, and hence we need to consider other approaches if we want to study termination over $\RCA$ (for instance, by \emph{operational semantics}).  Anyway we do not formally discuss semantics.  For the sake of exposition, it is enough to say that one evaluates a program function $f$ given an assignment of values $\mb u$  to its parameters (i.e. an element of $\NN^{\arity(f)}$) by evaluating the body of $f$, that is $f(\mb u)=e^f(\mb u)$.


\begin{example}[P\'{e}ter-Ackermann]
	
	\[
	\begin{split}
	A(x,y) = &\mbb{ if } x=0 \mbb{ then } y+1 \mbb{ else}\\
	&\mbb{ if } y=0 \mbb{ then }  A(x-1,1)\\
	& \mbb{ else }  A(x-1,A(x,y-1))
	\end{split}
	\]
\end{example}

\subsection{Size-change graphs}\label{graph}
In order to express the notion of size-change termination, first of all we need the definition of size-change graph (see \cite[Definition 3]{Jones1}).

\begin{definition}[size-change graph]
	Let $P$ be a program and $f,\ g\in\fun(P)$. A \emph{size-change graph} $G:f \to g$ for $P$ is a bipartite  graph  on $(\var(f),\var(g))$. The set of edges is a subset of $\var(f) \times \bp{\downarrow, \Downarrow} \times \var(g)$ such that there is at most one edge for any $x\in \var(f)$ and $y \in \var(g)$.
\end{definition}

We say that $f$ is the \emph{source} function of $G$ and $g$ is the \emph{target} function of $G$.
We call  $(x,\downarrow,y)$ the \emph{decreasing edge} (strict arc), and we denote it by $\arcd{x}{y}$. We call $(x,\Downarrow,y)$ the \emph{weakly-decreasing edge} (non-strict arc), and we denote it by $\arce{x}{y}$. We write $x\to y\in G$ as a shorthand for $\arcd{x}{y}\in G\lor \arce{x}{y}\in G$.

Note that the absence of edges between two parameters $x$ and $y$ in the size-change graph $G$  indicates either an unknown or an increasing relation in the call $f\to g$.

Informally, a size-change graph is an approximation of the \emph{state transition} relation induced by the program. A size-change graph $G:f\to g$ for a call  $\tau: f\to g$ is \emph{safe} if it reflects the relationship between the parameter values in the program call.
%

In more detail, a \emph{state} of a program $P$ is a pair $(f,\mb u)$,  where $f\in\fun(P)$ and $\mb u$ is a tuple of length  $\arity(f)$. If in the body of $f\in\fun(P)$ there is a call
\[
\dots \tau: g(e_0, \dots, e_{m-1})
\]
we define a \emph{state transition} $(f,\mb u)\xrightarrow{\tau}(g,\mb v)$ 
to be a pair of states such that $\mb v$ is the sequence of values obtained by the expressions $(e_0,\dots, e_{m-1})$ when $f$ is evaluated with values $\mb u$. 

Let $\var(f)=\{x_0,\ldots, x_{n-1}\}$ and $\var(g)=\{y_0,\ldots y_{m-1}\}$. We say that   a size-change graph $G: f\to g$ is \emph{safe}  for $\tau$ if every edge is safe, where an edge $x_i \xrightarrow{r} y_j$ is safe 
if for any $\mb u\in \NN^{n}$ and $\mb v \in \NN^{m}$ such that $(f,\mb u)\xrightarrow{\tau}(g,\mb v)$, $r=\mathord\downarrow$ implies that $\mb u_i>\mb v_j$ and $r=\mathord\Downarrow$ implies that $ \mb u_i\geq \mb v_j$.

Note for instance that the size-change graph without edges is always safe.

\begin{example}[P\'{e}ter-Ackermann] \label{exampleAck}~
	\begin{center}
		\begin{minipage}[c]{0.3\textwidth}
			\[\begin{tikzpicture}[xscale=1,yscale=1]
			
			\node (x1) at (0, 1) {$x$};
			\node (y1) at (0, 0) {$y$};
			\node (x2) at (2, 1) {$x$};
			\node (y2) at (2, 0) {$y$};
			\path (x1) edge [->]node [auto] {${\downarrow} $} (x2);
			\node (G) at (1,-1) {$G_{0,1}:A\to A$};
			\end{tikzpicture}
			\]
			
		\end{minipage}
		\begin{minipage}[c]{0.3\textwidth}
			\[\begin{tikzpicture}[xscale=1,yscale=1]
			
			\node (x1) at (0, 1) {$x$};
			\node (y1) at (0, 0) {$y$};
			\node (x2) at (2, 1) {$x$};
			\node (y2) at (2, 0) {$y$};
			\path (x1) edge [->]node [auto] {${\Downarrow} $} (x2);
			\path (y1) edge [->]node [auto] {${\downarrow} $} (y2);
			\node (H) at (1,-1) {$G_2:A\to A$};
			\end{tikzpicture}
			\]
		\end{minipage}
	\end{center}
	
	The size-change graph $G_{0,1}$ safely describes both calls  $\tau_0 : A(x-1,1)$ and $\tau_1: A(x-1,A(x,y-1))$. The size-change graph $G_2$ safely describes call $\tau_2: A(x,y-1)$. 
\end{example}

Note that we could have assumed that for any parameter in the target there is at most one edge, since in every call of the programs we consider any parameter value in the target depends at most from one parameter in the source.   However this restriction is not essential. Note also that the SCT framework has been generalized in order to deal with other kinds of \emph{monotonicity constraints} \cite{BenAmram}, where SCT only deals with two constraints $x>y$ (a strict arc) and $x\geq y$ (a non-strict arc).

Nonetheless we want to emphasize that the notion of size-change graph is clearly independent of that of a program and so we can define it directly. For simplicity we may assume that every function $f\in\fun$ comes with a set of parameters $\var(f)$ of size $\arity(f)$. 

\begin{definition}[size-change graph]
	Let $f,g\in\fun$. A \emph{size-change graph} $G:f \to g$ is a bipartite  graph on  $(\var(f),\var(g))$. The set of edges is a subset of $\var(f) \times \bp{\downarrow, \Downarrow} \times \var(g)$ such there is at most one edge for any $x\in \var(f)$ and $y \in \var(g)$.
\end{definition}

\subsection{SCT criterion}

\begin{definition}[composition]
	As in \cite{Jones}, given two size-change graphs $G_0:f\to g$ and $G_1:g\to h$ we define their \emph{composition} $G_0;G_1:f\to h$. The composition of two edges $\arce{x}{y}$ and $\arce{y}{z}$ is one edge $\arce{x}{z}$. In all other cases the composition of two edges from $x$ to $y$ and from $y$ to $z$ is the edge $\arcd{x}{z}$. Formally, $G_0;G_1$ is the size-change graph with the following set of edges:
	\[
	\begin{split}
	E= \{\arcd{x}{z} : \ &\exists y \in \var(g)\  \exists r\in \bp{\downarrow,\Downarrow}((\arcd{x}{y}\in G_0 \wedge y\xrightarrow{r}z \in G_1) \\
	& \vee (x\xrightarrow{r}y\in G_0  \wedge \arcd{y}{z}\in G_1))\}\\
	\cup \{\arce{x}{z} : \ &\exists y \in \var(g)(\arce{x}{y}\in G_0 \wedge \arce{y}{z}\in G_1) \wedge \forall y \in \var(g) \\
	&\forall r, r'\in \bp{\downarrow,\Downarrow} ((x \xrightarrow{r} y \in G_0 \wedge y \xrightarrow{r'} z \in G_1) \implies r=r'=\mathord \Downarrow)\}.
	\end{split}
	\]
\end{definition}

Observe that the composition operator ``{$;$}'' is associative. Moreover we say that the size-change graph $G$ is \emph{idempotent} if $G;G=G$.

Given a finite set of size-change graphs $\mathcal{G}$, $\clG$ is the smallest set which contains $\mathcal{G}$ and is closed by composition. Formally $\clG$ is the smallest set such that 
\begin{itemize}
	\item $\mathcal{G}\subseteq \clG$;
	\item If $G_0:f \to g$ and $G_1:g \to h$ are in $\clG$, then $G_0;G_1\in \clG$.
\end{itemize}


\begin{definition}[multipath]
	A \emph{multipath} $\mathcal{M}$ is a graph sequence $G_0,\dots, G_n,\dots$ such that the target function of $G_i$ is the source function of $G_{i+1}$ for all $i$. A \emph{thread} is a connected path of edges in $\mathcal{M}$ that starts at some $G_t$, where $t \in \NN$. A multipath $\mathcal{M}$ has \emph{infinite descent} if some thread in $\mathcal{M}$ contains infinitely many decreasing edges. 
\end{definition}

\begin{definition}[description]
	A \emph{description} $\mathcal{G}$ of $P$ is a finite set of size-change graphs such that to every  call $\tau:f\to g$  of $P$ corresponds  exactly one  $G_\tau\in\mcl G$. 
\end{definition}

A description $\mathcal{G}$ of $P$ is \emph{safe} if each graph in $\mathcal{G}$ is safe. 
Note that there are finitely many descriptions, and in particular finitely many safe descriptions.

\begin{definition}[SCT description]
	Let $\mathcal{G}$ be a description of $P$. We say that $\mathcal{G}$ is \emph{size-change terminating} (SCT)  if every infinite multipath $\mcl M=G_0,\dots, G_n,\dots$, where every graph $G_n\in\mcl G$, has an infinite descent.
\end{definition}

It is clear that a program $P$ with a safe SCT description does not have infinite state transition sequences. 
Thus the existence of a safe SCT description is a sufficient condition for termination.


We now can state the SCT criterion.
\begin{theorem}[SCT criterion]
	Let $\mathcal{G}$ be a description of $P$. Then $\mathcal{G}$ is SCT  iff every idempotent $G\in{\sf cl}(\mathcal{G})$ has an arc $\arcd{x}{x}$.
\end{theorem}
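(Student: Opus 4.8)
The plan is to prove both directions by relating infinite multipaths to a combinatorial structure on $\clG$. The key observation is that for a finite set $\mcl G$, the closure $\clG$ is finite (it is a finite set of size-change graphs over the finitely many functions of $P$ with their finitely many parameters), and every infinite multipath $\mcl M = G_0, G_1, \dots$ built from graphs in $\mcl G$ gives rise to an infinite sequence of partial composites. Specifically, given a multipath, for each pair $i < j$ put $H_{i,j} = G_i; G_{i+1}; \cdots; G_{j-1} \in \clG$. By associativity of composition, this defines a coloring-like structure that is \emph{transitive}: $H_{i,k} = H_{i,j}; H_{j,k}$. The crucial point is that the value $H_{i,j}$ of a composite tells us whether the segment of $\mcl M$ from $i$ to $j$ contains a thread with a decreasing edge connecting a fixed pair of parameters: an arc $\arcd{x}{z} \in H_{i,j}$ witnesses a thread through the segment $[i,j)$ that starts at $x$, ends at $z$, and passes a strict arc somewhere.

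For the ($\Leftarrow$) direction — if every idempotent $G \in \clG$ has an arc $\arcd{x}{x}$, then $\mcl G$ is SCT — I would argue contrapositively. Suppose $\mcl M$ has no infinite descent. Using $\RT^2$ (as in the original proof) or, more economically, $\STAR$, applied to the finite "coloring" $\{i<j\} \mapsto H_{i,j}$ (finitely many colors since $\clG$ is finite), one extracts some fixed $G \in \clG$ and an infinite set of indices on which all the relevant composites equal $G$; transitivity forces $G; G = G$, so $G$ is idempotent. By hypothesis $G$ has an arc $\arcd{x}{x}$. Concatenating this self-arc along the infinitely many segments all colored $G$ produces a single thread in $\mcl M$ with infinitely many decreasing edges — an infinite descent, contradiction. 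The care needed here is to make sure the self-arc at the "overlap" points glues into one connected thread; this is where one uses that $\arcd{x}{x} \in G = H_{i_n, i_{n+1}}$ means there is a thread from $x$ at level $i_n$ to $x$ at level $i_{n+1}$ passing a strict arc, and these chain up because both endpoints are the same parameter $x$.

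For the ($\Rightarrow$) direction — if $\mcl G$ is SCT then every idempotent $G \in \clG$ has a self-arc — suppose $G \in \clG$ is idempotent with no arc $\arcd{x}{x}$. Since $G \in \clG$, $G$ is a composite $G = G_{\tau_0}; \cdots; G_{\tau_{p-1}}$ of graphs coming from actual program calls, forming a finite multipath segment $\mcl S$ from some function $f$ back to $f$ (as $G: f \to f$, being idempotent). Repeating $\mcl S$ infinitely gives an infinite multipath $\mcl M = \mcl S \conc \mcl S \conc \cdots$. By SCT, $\mcl M$ has an infinite descent: some thread with infinitely many strict arcs. Tracking which parameter of $f$ this thread occupies at the successive "seams" (the endpoints of the copies of $\mcl S$), by the infinite pigeonhole principle $\RT^1$ applied to the finitely many parameters of $f$ there are two seams $a < b$ carrying the thread on the same parameter $x$, with at least one strict arc strictly between them. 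The composite of the segment from seam $a$ to seam $b$ is $G; \cdots; G = G$ by idempotence, and the surviving thread witnesses $\arcd{x}{x} \in G$ — contradiction.

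The main obstacle is the precise bookkeeping in the ($\Leftarrow$) direction: one must verify that composition of size-change graphs correctly tracks the existence of threads-with-a-strict-arc through a segment (this is essentially a correctness lemma for ``$;$'', which should be provable in $\RCA$ by $\Sigma^0_1$-induction on segment length), and that the Ramsey-type extraction can be carried out with the weak principle $\STAR$ rather than full $\RT^2$ — since the whole point of the paper is the reduction in logical strength, I would phrase the argument to use only $\STAR_k$ (for $k = |\clG|$) plus $\RCA$, deferring the full equivalence with $\is2$ to later sections.
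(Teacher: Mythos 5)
Your overall architecture matches the paper's: the forward direction by unrolling an idempotent $G\in\clG$ into a periodic multipath and applying pigeonhole at the seams, the backward direction by colouring pairs $i<j$ with the composite $G_i;\cdots;G_{j-1}\in\clG$ and extracting an idempotent colour. The forward direction is essentially the paper's argument and is fine, except that you should invoke the \emph{finite} pigeonhole principle (two of the first $n+1$ seam-parameters coincide, choosing seams so that each consecutive pair is separated by a strict arc), not $\RT^1$: for a possibly nonstandard number of parameters, $\RT^1$ is equivalent to $\mathsf{B}\Sigma^0_2$ and is not available in $\RCA$, whereas the finite version is.

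The genuine gap is in the backward direction, at the gluing step, and it is exactly the point where the paper has to work. $\STAR$ does \emph{not} give you ``an infinite set of indices on which all the relevant composites equal $G$'' in the sense your chaining needs. It gives a single apex $t$ and infinitely many pairs $m_n<l_n$ with $c(t,m_n)=c(t,l_n)=c(m_n,l_n)=G$; the $G$-coloured segments $[m_n,l_n)$ are pairwise disjoint and \emph{not adjacent}, and you have no control over $c(l_n,m_{n+1})$. So the self-threads witnessing $\arcd{x}{x}\in c(m_n,l_n)$ do not concatenate: ``both endpoints are the same parameter $x$'' only helps when the end index of one segment is the start index of the next, which is what an $\RT^2$-homogeneous set provides and what $\STAR$ does not. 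The paper bridges the gap through the apex: from $\arcd{x}{x}\in c(t,m_{n+1})$ it factors the arc at $l_n$ as $x\to y\in c(t,l_n)$ followed by $y\to x\in c(l_n,m_{n+1})$ with at least one strict, then uses $c(t,l_n)=c(m_n,l_n)$ to transplant the first half so as to conclude $\arcd{x}{x}\in c(m_n,m_{n+1})$ --- note that this composite need not equal $G$ at all; only the presence of the arc is established. These arcs now sit on genuinely consecutive segments $[m_n,m_{n+1})$ and chain into an infinite descent. Without this transplant your argument is really the $\RT^2$ proof; it proves the stated theorem, but it does not deliver the $\STAR$-only (hence $\is2$-level) version that you announce and that the paper's calibration requires.
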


To the aim of analysing in Reverse Mathematics it is convenient to state the SCT criterion for arbitrary sets of size-change graphs.

\begin{definition}[SCT criterion for graphs]
	Let $\mathcal{G}$ be a finite set of size-change graphs.  Then $\mathcal{G}$ is SCT iff every idempotent $G\in{\sf cl}(\mathcal{G})$ has an arc $\arcd{x}{x}$. 
\end{definition}

It is not difficult to see that the two formulations of the SCT criterion are equivalent. In fact, given a finite set $\mathcal{G}$ of size-change graphs, it is straightforward to define a program $P$ such that $\mathcal{G}$ is a description of $P$. 
In more detail,  let $f_0, \dots, f_m$ be the finite set of source and target functions of $\mcl G$. Without loss of generality, we may assume that all functions have the same arity $n \in \NN$.  For any $i$, let $f_{i_0}, \dots, f_{i_{k-1}}$ be the functions (with repetition if there are more graphs with the same source and target functions) which correspond to the target of a graph whose source is $f_i$. Write the code:
\begin{align*}
f_i(x_0, \dots x_{n-1}) &= \tau_0:  f_{i_0}( e_0^0, \dots, e_{n-1}^0) &\mbox{if }  x_0= 0.\\
&=  \dots \\
&= \tau_{k-1}:  f_{i_{k-1}}( e_0^{k-1}, \dots, e_{n-1}^{k-1}) &\mbox{if }  x_0= k-1.\\
\end{align*}
where the expression $e_j^h$ is determined by the source and the kind of the edge to $x_j$ in the corresponding graph, if such an edge exists. Otherwise it is $x_j+1$. 

The union of these codes is a  program $P_\mathcal{G}$. Of course,  $\mathcal{G}$ is a description of $P_\mathcal{G}$. Therefore:

\begin{proposition}[$\RCAo$]
	The following are equivalent:
	\begin{enumerate}
		\item SCT criterion
		\item SCT criterion for graphs
	\end{enumerate}
\end{proposition}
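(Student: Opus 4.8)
The plan is to prove both directions of the equivalence by straightforward reductions, using the explicit construction $P_{\mathcal G}$ sketched just before the statement. Since neither direction requires any set-existence strength beyond $\RCAo$ — both are essentially definitional translations — the only real work is checking that the translations preserve the relevant notions (being SCT, being idempotent, having a self-loop arc).

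For the implication $(1)\Rightarrow(2)$: assume the SCT criterion (for descriptions of programs). Given a finite set $\mathcal G$ of size-change graphs, form the program $P_{\mathcal G}$ as above, after first normalizing so that all source/target functions share a common arity $n$ (pad parameter lists with dummy parameters that receive no edges; this changes neither idempotency nor the presence of a self-loop arc). The key observations, each provable in $\RCAo$, are: first, $\mathcal G$ is a description of $P_{\mathcal G}$, so the criterion applies verbatim; second, the composition operation and hence ${\sf cl}(\cdot)$ are defined identically in both formulations, so ${\sf cl}(\mathcal G)$ is the same finite set in either reading, and likewise ``idempotent'' and ``has an arc $\arcd{x}{x}$'' are unchanged; and third, $\mathcal G$ is SCT as a set of graphs (every infinite multipath over $\mathcal G$ has infinite descent) if and only if $\mathcal G$ is SCT as a description of $P_{\mathcal G}$, since the definition of SCT for a description quantifies over exactly the infinite multipaths built from graphs in $\mathcal G$. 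Chaining these gives $(2)$.

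For $(1)\Leftarrow(2)$: assume the SCT criterion for graphs. Let $P$ be any program and $\mathcal G$ a description of $P$. Then $\mathcal G$ is in particular a finite set of size-change graphs, so the criterion for graphs applies to it directly, and — again because ${\sf cl}$, idempotency, self-loops, and the SCT property all have the same meaning whether we regard $\mathcal G$ abstractly or as a description of $P$ — we conclude that $\mathcal G$ is SCT iff every idempotent $G \in {\sf cl}(\mathcal G)$ has an arc $\arcd{x}{x}$, which is exactly the SCT criterion for $P$.

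The one point deserving genuine care — and the closest thing to an obstacle — is the arity-normalization step and, more broadly, the verification that the map $\mathcal G \mapsto (\mathcal G$ as a description of $P_{\mathcal G})$ and its ``inverse'' really do commute with all the operators involved; but since $\RCAo$ proves the existence of the (finite, canonically coded) objects $P_{\mathcal G}$ and ${\sf cl}(\mathcal G)$ and can verify finite equalities of such objects by $\Sigma^0_1$-induction, this is entirely routine. No principle stronger than $\RCAo$ enters, which is why the proposition is stated over $\RCAo$ and is the trivial companion to the substantive equivalences that follow.
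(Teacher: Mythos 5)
Your proposal is correct and follows essentially the same route as the paper: the nontrivial direction is realized by the explicit program $P_{\mathcal G}$ (with the same arity-normalization step the paper invokes as ``without loss of generality''), making any finite set of size-change graphs a description of a program, while the converse is immediate since every description is a finite set of size-change graphs. Your additional remarks — that padding with edge-free dummy parameters preserves idempotency, self-loops, and descent, and that ${\sf cl}$, idempotency, and the SCT property read identically in both formulations — are exactly the routine verifications the paper leaves implicit.
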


\section{Proving the SCT criterion}

The classical proof of the SCT criterion \cite{Jones} uses Ramsey's theorem for pairs. Actually,
what we really need is that there exist infinitely many monochromatic triangles which share a fixed vertex: we need the homogeneous cliques in order to prove that the graph is idempotent and that there are infinitely many strictly decreasing edges in the thread and we need  that they share a fixed vertex in order to guarantee the continuity of the path. This is why we introduce the principle $\STAR$. 

\begin{description}
	\item[($\STAR_k$)] For any coloring $c: [\NN]^2 \to k$ there exist $i\in k$ and $t\in\N$ such that $c(t, m) = c(t, l) = c(m, l) = i$ for infinitely many pairs $\{m,l\}$.
	\item[($\STAR$)]$\forall k\in\N \ \STAR_k$.
\end{description}

We also introduce the following strengthening of the infinite pigeonhole principle:
\begin{description}
	\item[($\SPP_k$)] For any coloring $c\colon\N\to k$ there exists $I\subseteq k$ such that $i\in I$ iff $i<k$ and 
	$c(x)=i$ for infinitely many $x$.
	\item[($\SPP$)] $\forall k\in\N\ \SPP_k$.
\end{description}

For the reversal we use the fact that $\SPP$ is equivalent to $\Sigma^0_2$-induction. 

\begin{lemma}[$\RCA$]
	The following are equivalent:
	\begin{enumerate}
		\item $\is2$
		\item $\SPP$
	\end{enumerate}
\end{lemma}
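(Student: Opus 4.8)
The plan is to prove both implications directly. For the direction $\is2 \Rightarrow \SPP$, fix a coloring $c\colon\N\to k$. I would work with the $\Sigma^0_2$ formula $\psi(i)$ asserting ``$i<k$ and $c(x)=i$ for infinitely many $x$'', i.e. $\forall n\,\exists x>n\,(c(x)=i)$. The set $I = \{i<k : \psi(i)\}$ is what we want, but to form it as a genuine set in $\RCA$ we need to know it is, provably, $\Delta^0_1$ (or at least that it exists). Here is where $\is2$ enters: I would argue by $\Sigma^0_2$-induction on $k$ that for every $k$ the set $\{i<k : \psi(i)\}$ exists. The successor step uses that bounded $\Sigma^0_2$ comprehension for a fixed finite bound follows from $\is2$ — equivalently, one can phrase this as: $\is2$ proves $\bs2$, and bounded $\Sigma^0_2$ collection lets us replace the unbounded existential quantifiers in $\psi(0),\dots,\psi(k-1)$ by a single bound, after which the finitely many instances become decidable relative to that bound, giving a finite set. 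This yields $I$ as a (finite, coded) set, establishing $\SPP_k$, and hence $\SPP$.

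For the reversal $\SPP \Rightarrow \is2$, the cleanest route is to derive $\bs2$ (or directly $\is2$) from $\SPP$. Recall $\is2$ is equivalent over $\RCA$ (which includes $\is1$) to the least-number principle for $\Sigma^0_2$ formulas, and also $\is2 \leftrightarrow \bs2 + \is1$ fails in general but $\bs2$ does not suffice — so I must get the full $\is2$. I would take a $\Sigma^0_2$ formula $\theta(n) \equiv \exists y\,\forall z\,\rho(n,y,z)$ with $\rho$ $\Delta^0_1$, assume $\theta(0)$ and $\forall n(\theta(n)\to\theta(n+1))$ toward a contradiction with $\neg\theta(m)$ for some $m$, and build from this data a coloring $c\colon\N\to k$ (with $k = m+1$ or $k=2$) whose ``infinitely-often'' color pattern encodes whether $\theta$ holds at each stage, so that the existence of the set $I$ provided by $\SPP$ decides all the instances $\theta(0),\dots,\theta(m)$ simultaneously and contradicts the failure of induction. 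Concretely: use the standard trick of letting $c(x)$ record, via a bounded search up to $x$, the largest $j\le m$ such that a witness $y$ for $\theta(j)$ ``survives all challenges'' up to $x$; then $i\in I$ iff $\theta(i)$ holds and is ``stable'', and the downward-closure structure of the true initial segment contradicts $\theta$ being inductive but not universally true.

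\textbf{Main obstacle.} The delicate point is the reversal, specifically arranging the coloring so that membership in the $\SPP$-set $I$ \emph{exactly} captures the truth of $\Sigma^0_2$ instances, rather than merely a $\bs2$-style statement — getting genuine $\is2$ and not just $\bs2$ out of $\SPP$ requires care, since $\SPP_k$ only ever talks about finitely many ($k$-many) colors and a single coloring, so the encoding must compress an entire putative counterexample to $\is2$ into one finite-valued coloring. The forward direction's subtlety is purely bookkeeping: verifying that the finitely-many-instances set genuinely exists as a set object in the model, which is exactly the content of applying $\is2$ (via $\bs2$) rather than an oversight. I would expect the bulk of the writing to go into the coloring construction for $\SPP \Rightarrow \is2$ and the verification that $I$ decodes correctly.
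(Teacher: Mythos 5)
Your forward direction is essentially the paper's. The predicate ``$c(x)=i$ for infinitely many $x$'' is $\Pi^0_2$ (not $\Sigma^0_2$ as you write, though this is harmless, since within a fixed bound comprehension for a class and for its dual class are interderivable), and the paper simply invokes the well-known equivalence of $\is2$ with bounded $\Pi^0_2$-comprehension over $\RCA$ --- which is exactly the fact you are re-deriving by induction on $k$ with collection. So that half is fine, if more laborious than necessary.

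The reversal is where there is a genuine gap. Your high-level plan --- one finite-valued coloring whose recurrent colors decide all instances $\theta(0),\dots,\theta(m)$ simultaneously --- is the right one; indeed the paper implements it by showing that $\SPP$ yields bounded $\Pi^0_2$-comprehension, which is outright equivalent to $\is2$, so your worry about only obtaining $\bs2$ dissolves once you target bounded comprehension rather than the induction axiom directly. But the concrete coloring you sketch does not work. First, the event ``some witness $y$ for $\theta(j)$ survives all challenges up to $x$'' can occur for every $x$ even when the $\Sigma^0_2$ instance $\theta(j)$ is false: a fresh large candidate $y\le x$ may pass all challenges $z\le x$ simply because it has not yet been seriously tested, so recurrence of this event does not characterize truth of $\theta(j)$, and you give no mechanism for the ``stability'' you invoke. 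Second, coloring $x$ by the \emph{largest} such $j$ discards all information about smaller indices, so even with a sound survival test the set $I$ would identify only one index rather than deciding each $\theta(i)$ for $i\le m$; since the induction hypothesis only propagates upward ($\theta(n)\to\theta(n+1)$), knowing the largest recurrent index does not determine the rest. The paper's construction avoids both problems: writing the $\Pi^0_2$ predicate as $\forall x\exists y\,\theta(i,x,y)$, it runs a round-robin search maintaining for each $i<k$ a counter $x_i$ of challenges already met, emits color $i$ exactly when a witness for the current challenge $x_i$ is found, and emits a dummy color $k$ otherwise; then color $i$ recurs if and only if every challenge is eventually met, i.e., if and only if the $\Pi^0_2$ instance holds, so the $\SPP$-set minus the dummy color is precisely the bounded comprehension set. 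You would need to replace your survival-based coloring with a witness-counting scheme of this kind for the reversal to go through.
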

\begin{proof}
	It is well-known that $\is2$ is equivalent over $\RCA$ to \emph{bounded comprehension} for $\Pi^0_2$-formulas, that is the axiom schema
	\[ \forall k\ \exists X\ \forall i\ (i \in X\leftrightarrow i<k\land \varphi(i)), \]
	where  $\varphi$ is $\Pi^0_2$. It immediately follows that $\is2$ implies $\SPP$. Let us show that $\SPP$ implies bounded $\Pi^0_2$-comprehension. Let $\varphi(i)=\forall x\exists y\ \theta(i,x,y)$. 
	We define $c\colon\N\to k+1$ by primitive recursion as follows: 
	\begin{enumerate}
		\item Let $s=0$ and $x_i=0$ for all $i<k$;
		\item Suppose we have defined $c(x)$ for every $x<s$. For all $i<k$, if $\exists y<s\ \theta(i,x_i,y)$, let $c(s+i)=i$ and $x_i=x_i+1$. Otherwise let $c(s+i)=k$;
		\item Let $s=s+k$. Return to step $2$.
	\end{enumerate}
	By $\SPP$, the set $I=\{i\leq k\colon \exists^\infty x\ c(x)=i\}$ exists. One can check that $I\setminus{k}=\{i<k\colon \forall x\exists y\ \theta(i,x,y)\}$.
\end{proof}

The following shows that one direction of the SCT criterion is already provable in $\RCA$.
\begin{proposition}[$\RCAo$]
	Let $\mathcal{G}$ be a finite set of size-change graphs. If every  multipath $M= G_{0},\dots, G_{n},\dots$ has an infinite descent, then  every idempotent $G \in \clG$ has an arc $\arcd{x}{x}$.
\end{proposition}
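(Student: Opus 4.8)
The plan is to prove the contrapositive: assuming some idempotent $G\in\clG$ has \emph{no} arc $\arcd{x}{x}$, I will construct an infinite multipath, all of whose graphs lie in $\mathcal G$, that has no infinite descent, contradicting the hypothesis. First I would record that, since $G\in\clG$, associativity of composition lets us write $G=H_0;H_1;\cdots;H_{\ell-1}$ with each $H_i\in\mathcal G$ and matching source/target functions; since $\clG$ is a finite set of fixed size one may even take $\ell$ standard, though this is not needed. Idempotence forces the source and target functions of $G$ to coincide --- call this function $f$ --- so $H_{\ell-1}$ and $H_0$ are composable, and the periodic sequence $\mathcal M$ defined by $G_n:=H_{n\bmod\ell}$ (which exists by $\Delta^0_1$-comprehension from the finite tuple $H_0,\dots,H_{\ell-1}$) is an infinite multipath all of whose graphs lie in $\mathcal G$.

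Next I would isolate two elementary facts about composition, each proved in $\RCAo$ by $\Delta^0_1$-induction on the relevant length (so that they hold for nonstandard lengths too). (i) If $K_0,\dots,K_{m-1}$ is a finite multipath and $e_0,\dots,e_{m-1}$ is a path of edges, with $e_i$ an edge of $K_i$ and the path running from a vertex $u$ to a vertex $v$, then $K_0;\cdots;K_{m-1}$ contains an edge from $u$ to $v$, and that edge is strict whenever some $e_i$ is strict. (ii) If $G$ is idempotent, then the $m$-fold self-composition satisfies $G;\cdots;G=G$ for every $m\ge1$; combined with associativity and the periodicity of $\mathcal M$, this gives that for all $0\le p<q$ the segment composition $G_{p\ell};G_{p\ell+1};\cdots;G_{q\ell-1}$ equals $G$. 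I expect fact (i) to be the one genuinely delicate point, because the clause of the definition of ``$;$'' that produces non-strict composed edges carries a side condition (a non-strict edge appears only in the absence of a strict alternative). The clean statement to push through the induction is the weaker ``there is an edge, and it is strict if some input edge is strict'': a strict input edge always forces a strict output via the first clause of the definition of composition, and that property survives the inductive step regardless of the side condition.

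Finally, suppose for contradiction that $\mathcal M$ has an infinite descent, i.e.\ there is an infinite thread, which (having infinitely many edges) consists of edges $e_n$ for $n\ge t$ with $e_n$ an edge of $G_n$ from $w_n$ to $w_{n+1}$, such that $S:=\{n\ge t:e_n\text{ is strict}\}$ is infinite. Let $p_0$ be least with $p_0\ell\ge t$; then $w_{p\ell}\in\var(f)$ for every $p\ge p_0$, and by (i) and (ii), for all $p_0\le p<q$ the graph $G$ contains an edge $w_{p\ell}\to w_{q\ell}$ which is strict whenever $[p\ell,q\ell)\cap S\ne\emptyset$. Since $\var(f)$ is finite with some fixed size $n$, and $\RCAo$ proves $\RT^1_n$ for each fixed $n$, there is $a\in\var(f)$ for which $Q_a:=\{q\ge p_0:w_{q\ell}=a\}$ is infinite. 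Using $S\ne\emptyset$, choose $r\in S$ large enough that $\lfloor r/\ell\rfloor\ge\min Q_a$, set $p:=\max\bigl(Q_a\cap[p_0,\lfloor r/\ell\rfloor]\bigr)$, and pick any $q\in Q_a$ with $q>\lfloor r/\ell\rfloor$. Then $p_0\le p<q$, $w_{p\ell}=w_{q\ell}=a$, and $r\in[p\ell,q\ell)\cap S$, so $G$ contains $\arcd{a}{a}$, contradicting the assumption on $G$. Hence $\mathcal M$ is an infinite multipath over $\mathcal G$ without infinite descent, completing the contrapositive.
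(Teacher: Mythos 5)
Your overall strategy is sound and, modulo the contrapositive packaging, is essentially the paper's: compose the graphs along a descending thread, use idempotence to collapse each full block back to $G$, and use a pigeonhole argument on the finitely many parameters of $f$ to produce a strict self-loop. You are in fact more careful than the paper on one point: by decomposing $G$ as $H_0;\cdots;H_{\ell-1}$ with $H_i\in\mathcal G$ and running the periodic multipath $G_n=H_{n\bmod\ell}$, you only invoke the hypothesis for multipaths whose graphs lie in $\mathcal G$ itself, whereas the paper applies it directly to the multipath $G,G,\dots$ with $G\in\clG$. Your fact (i), together with the observation that only the one-sided implication ``some strict input edge forces a strict composed edge'' survives the induction past the side condition in the definition of ``;'', is exactly the right lemma.

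There is, however, one step that is not available in $\RCAo$, and in this paper it is not a harmless slip: the appeal to $\RT^1_n$ with $n=|\var(f)|$ to obtain a parameter $a$ with $Q_a$ infinite. The set $\mathcal G$ is finite only in the coded sense, so $|\var(f)|$ is an element of the model and need not be standard (the same conflation occurs in your parenthetical claim that $\ell$ may be taken standard, though there you rightly do not use it). $\RCAo$ proves $\RT^1_k$ only for each externally fixed $k$; the internal scheme $\forall k\,\RT^1_k$ is equivalent to $\bs{2}$ over $\RCAo$ and hence unprovable there, and since the point of the paper is to locate the SCT criterion exactly at $\is2$, smuggling $\bs{2}$ into the supposedly $\RCAo$-provable direction is a genuine gap. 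The repair is immediate and is what the paper's own proof does: you never need an infinite fibre, only one repetition flanking a strict edge. By primitive recursion choose $q_0<q_1<\cdots<q_n$ with $q_0=p_0$ and $q_{j+1}$ the least $q>q_j$ such that $[q_j\ell,q\ell)\cap S\neq\emptyset$ (possible since $S$ is infinite); the finite pigeonhole principle, which $\RCAo$ proves for all $n$, gives $i<j\le n$ with $w_{q_i\ell}=w_{q_j\ell}=a$, and your facts (i) and (ii) applied to the block from $q_i\ell$ to $q_j\ell$ then yield $\arcd{a}{a}\in G$. With that replacement your argument goes through.
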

\begin{proof}
	Let $G$ be idempotent. Then $M = G, G, \dots, G, \dots$ is a multipath. By hypothesis there exists an infinite descent. Since $G$ is idempotent, one can define an infinite sequence $x_0,x_1,x_2,\ldots$ such that 
	$\arcd{x_i}{x_{i+1}}\in G$.  As there are finitely many parameters, by the finite pigeonhole principle, which is provable in $\RCA$, there exist $i<j$ such that $x=x_i=x_j$.   By idempotence of $G$, $\arcd{x}{x} \in G$. 
\end{proof}

\begin{theorem}[$\RCAo$]
	$\STAR$ implies the SCT criterion.
\end{theorem}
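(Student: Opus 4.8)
Since the Proposition above already gives one implication of the criterion in $\RCA$ alone, it suffices to prove, assuming $\STAR$, the converse: if every idempotent $G\in\clG$ has a strict arc $\arcd{x}{x}$, then $\mcl G$ is SCT, i.e.\ every infinite multipath $\mcl M=G_0,G_1,\dots$ with each $G_i\in\mcl G$ has an infinite descent. The plan is to compress $\mcl M$ into a finite colouring of pairs, feed it to $\STAR$, and read off an infinite descending thread from the resulting monochromatic triangles.

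First I would set, for $i<j$, $c(i,j)=G_i;G_{i+1};\cdots;G_{j-1}$. Since $\mcl G$ is finite, $\clG$ — a subset of the finite set of all size-change graphs among the functions occurring in $\mcl G$ — is finite and closed under composition, so $c$ is a colouring of $[\N]^2$ by the finitely many colours of $\clG$; associativity of ``$;$'' makes $c$ multiplicative, $c(i,k)=c(i,j);c(j,k)$ for $i<j<k$. Applying $\STAR$ to $c$ produces a colour $G\in\clG$ and a node $t$ such that $c(t,m)=c(t,l)=c(m,l)=G$ for infinitely many pairs $m<l$, with $t<m<l$ (the ``first node'' formulation of $\STAR$ from the introduction). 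Fixing one such triple, multiplicativity gives $G=c(t,l)=c(t,m);c(m,l)=G;G$, so $G$ is idempotent; as $G=c(m,l)\in\clG$, the hypothesis provides a parameter $x$ with $\arcd{x}{x}\in G$.

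Next I would extract infinitely many of these triples that are increasing and pairwise disjoint, say $t<m^0<l^0<m^1<l^1<\cdots$, and propagate the colour $G$ across the gaps. Writing $K_i=c(l^i,m^{i+1})$, multiplicativity and $c(t,l^i)=c(t,m^{i+1})=G$ give $G;K_i=c(t,l^i);c(l^i,m^{i+1})=c(t,m^{i+1})=G$, and hence
\[
c(m^i,m^{i+1})=c(m^i,l^i);c(l^i,m^{i+1})=G;K_i=G ,
\]
while also $c(t,m^0)=G$. A straightforward induction on $j-i$, formalisable in $\RCA$, shows that $\arcd{p}{q}\in c(i,j)$ produces a walk $p=p_i\to p_{i+1}\to\cdots\to p_j=q$ with $p_s\to p_{s+1}\in G_s$ for $i\le s<j$ and at least one strict edge among them (and $p\to q\in c(i,j)$ produces such a walk without the strictness claim). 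Applying this to $\arcd{x}{x}\in c(t,m^0)$ and to $\arcd{x}{x}\in c(m^i,m^{i+1})$ for every $i$ and concatenating the resulting walks — which match up, each beginning and ending at $x$ — yields a single infinite thread starting at $G_t$ with infinitely many strict edges, so $\mcl M$ has an infinite descent.

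I expect the extraction step to be the real obstacle: the greedy choice of an increasing pairwise disjoint family of monochromatic triples succeeds precisely when the ``middle'' nodes $m$ of the $G$-triangles can be taken unbounded, so one must rule out (or otherwise handle) the degenerate case in which some fixed node $v$ is the middle of infinitely many $G$-triangles. Dealing with this directly — an iterated use of $\STAR$ is not at our disposal in $\RCA+\STAR$ — seems to be the crux; the rest is just multiplicativity of $c$ and the self-arc hypothesis.
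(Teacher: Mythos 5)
Your argument is, in all essentials, the paper's own proof: the same colouring $c(i,j)=G_i;\cdots;G_{j-1}$, the same appeal to $\STAR$ over the finite set of colours $\clG$, the same idempotence computation, and the same greedy extraction of an increasing, pairwise disjoint family of $G$-triangles sharing the apex $t$. The one place you diverge is cosmetic but slightly cleaner than the paper: you show outright that $c(m^i,m^{i+1})=G$ (via $G;K_i=c(t,l^i);c(l^i,m^{i+1})=c(t,m^{i+1})=G$), whereas the paper only shows $\arcd{x}{x}\in c(g_0(n),g_0(n+1))$ by decomposing the arc $\arcd{x}{x}\in c(t,g_0(n+1))$ at the intermediate point $g_1(n)$ and using $c(t,g_1(n))=c(g_0(n),g_1(n))$. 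Either way, concatenating the resulting finite walks into a single infinite thread is a routine primitive recursion.

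The obstacle you flag at the end is not actually there. In this paper ``$c(t,m)=c(t,l)=c(m,l)=G$ for infinitely many pairs $m<l$'' is formalised as
\[
\forall n\ \exists m,l\ \bigl(l>m>n \wedge c(t,m)=c(t,l)=c(m,l)=G\bigr),
\]
that is, the \emph{smaller} element of the pair is already required to be unbounded; this is exactly the form written out when $\STAR$ is applied in the proof of this theorem, and the form proved in the proposition that $\RCAo\vdash\STAR_k$ for standard $k$. Consequently the degenerate case of a fixed middle node carrying all but finitely many of the $G$-triangles cannot arise: given $g(n)=(g_0(n),g_1(n))$, instantiate the displayed statement with $n:=g_1(n)$ and take the least witness pair; $\Sigma^0_1$-induction guarantees this recursion is total, so the witness function $g$ exists in $\RCA$. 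No iteration of $\STAR$ and no case split is needed. (If one instead read ``infinitely many pairs'' as ``infinitely many coded pairs'', your worry would be genuine --- a bounded set of middles with unbounded $l$'s does not obviously yield disjoint triangles --- which is precisely why the stronger reading is the one adopted throughout.)
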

\begin{proof}
	We prove the SCT criterion for graphs. Let $\mathcal{G}$ be a finite set of size-change graphs and assume that any idempotent graph in $\clG$ has a strict arc $\arcd{x}{x}$ for some parameter $x$.
	Let
	\[
	\mathcal{M}_\pi= G_0, \dots, G_n, \dots.
	\]
	We aim to prove that $\mathcal{M}_\pi$ has an infinite descent. Define $c:[\NN]^2 \to \clG$ as follows:
	\[
	c(i,j) = G_i; \dots; G_{j-1}.
	\]
	By applying $\STAR_{|\clG|}$ to the coloring $c$, we have:
	\[
	\exists t \exists G \in \clG \forall n \exists m, l ( l > m > n \wedge c(t, m) = c(t, l) = c(m, l) = G ).
	\]
	Then $G$ is idempotent, indeed
	\[
	G;G = c(t, m); c(m, l) = c(t,l) = G.
	\]
	By hypothesis, we have that there exists $\arcd{x}{x}\in G$. Define by primitive recursion a $\STAR$ witness function $g: \NN \to [\NN]^2$ such that for all $n$ 
	\begin{itemize}
		\item $t<g_0(n)<g_1(n)<g_0(n+1)$ and  
		\item $c(t,g_0(n))=c(t,g_1(n))=c(g_0(n),g_1(n))=G$, 
	\end{itemize}
	where $g(n)=(g_0(n),g_1(n))$. 
	
	We claim that there exists an infinite descent starting from $x$ in $G_t$. Since $\arcd{x}{x}\in c(t,g_0(0))$, it is sufficient to show that $\arcd{x}{x}\in c(g_0(n),g_0(n+1))$ for any $n$.
	As $\arcd{x}{x} \in c(t,g_0(n+1))$, there exists $y$ such that $x\to y \in c(t,g_1(n))$ and $y\to x \in c(g_1(n),g_0(n+1))$, and at least one of them is strict. Now $c(t,g_1(n))=c(g_0(n),g_1(n))$, and so $x\to y \in c(g_0(n),g_1(n))$. Therefore we have $\arcd{x}{x}\in c(g_0(n),g_0(n+1))$, as desired. 
\end{proof}

\begin{theorem}[$\RCAo$]\label{reversal}
	The SCT criterion implies $\SPP$.
\end{theorem}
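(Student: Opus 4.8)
The plan is to encode an arbitrary coloring $c\colon\N\to k$ into a fixed finite family of size-change graphs satisfying the hypothesis of the SCT criterion for graphs, and then to read off $\SPP$-type information from the infinite descent that the criterion produces. Work in $\RCAo$. Given $c\colon\N\to k$, put parameters $x_0,\dots,x_{k-1}$ on a single function and, for $i<k$, let $G_i$ be the size-change graph with the strict self-arc $\arcd{x_i}{x_i}$, the non-strict self-arcs $\arce{x_m}{x_m}$ for $m<i$, and no edge at all touching $x_m$ for $m>i$. Put $\mathcal G=\{G_0,\dots,G_{k-1}\}$ and let $\mathcal M_c$ be the multipath $G_{c(0)},G_{c(1)},G_{c(2)},\dots$.

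A direct computation shows $G_i;G_j=G_{\min(i,j)}$, so $\clG=\mathcal G$; every $G_i$ is idempotent, and each one carries the strict self-arc $\arcd{x_i}{x_i}$, so no self-arc-free graph (in particular no empty graph) lies in $\clG$. Hence the hypothesis of the SCT criterion holds, and the criterion forces $\mathcal M_c$ to have an infinite descent. Since every graph in $\mathcal G$ consists of self-arcs only, a thread in $\mathcal M_c$ is just a loop at a single parameter $x_m$: starting from a stage $t$ it stays alive exactly as long as $c(n)\ge m$, and its strict edges are precisely the steps $n$ with $c(n)=m$. So an infinite-descent thread yields an $m<k$ with $c(n)\ge m$ for all but finitely many $n$ and $c(n)=m$ for infinitely many $n$, and any such $m$ is necessarily the least color occurring infinitely often. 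Thus $\RCAo$ together with the SCT criterion proves: for every $k$ and every $c\colon\N\to k$ the set $\{i<k:\exists^\infty n\;c(n)=i\}$ is nonempty and has a least element. Feeding in the coloring constructed in the proof of the lemma equating $\is2$ with $\SPP$, which reduces an arbitrary $\Pi^0_2$ formula to one of the shape $\exists^\infty n\;c(n)=i$, we obtain the bounded $\Pi^0_2$ least-number principle (every set $\{i<k:\psi(i)\}$ with $\psi\in\Pi^0_2$ is empty or has a least element).

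It remains to pass from \emph{the least element exists} to \emph{the whole set exists}, which gives $\SPP$. Given $\chi\in\Pi^0_2$ and $k$, apply the bounded $\Pi^0_2$ least-number principle to the $\Pi^0_2$ formula $\varphi(j)$ asserting that $\chi(i)$ holds for every $i<k$ at which the binary expansion of $j$ has a $1$ (with $j$ ranging over $\{0,\dots,2^k-1\}$), and also to $j\mapsto\varphi(2^k-1-j)$: this produces the largest $j$ whose $1$-digits all sit at positions $i$ with $\chi(i)$, which is exactly the code of $\{i<k:\chi(i)\}$, so that set exists by $\Delta^0_1$-comprehension. Hence bounded $\Pi^0_2$-comprehension holds, therefore $\is2$, therefore $\SPP$ by the lemma. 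The step I expect to be the real obstacle is conceptual: a single use of the criterion can extract only one color of $I=\{i<k:\exists^\infty n\;c(n)=i\}$, because having an infinite descent asserts nothing more than the existence of some thread; so the actual work is (i) tuning the graphs so that $\clG$ contains no self-arc-free idempotent --- the computation $G_i;G_j=G_{\min(i,j)}$, and checking that no degenerate graph slips in --- and (ii) boosting this \emph{minimum only} information up to full $\SPP$. The syntactic equivalence of the two formulations of the criterion is already covered by the earlier proposition.
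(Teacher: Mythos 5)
Your proof is correct, but it reaches $\SPP$ by a genuinely different route than the paper. The paper builds one elaborate family of graphs --- parameters $z_I$ indexed by the $2^k-1$ nonempty subsets $I\subseteq k$, and graphs $G_{\chi,i}$ indexed by choice functions and colors, with the cycling dynamics of $\chi_I(x)$ through the enumeration $\sigma_I$ arranged so that $z_I$ carries a strict self-arc at stage $x$ exactly when $I\in\mathcal A_x$ --- and then a single application of the criterion places the infinite descent at the parameter $z_I$ for $I$ \emph{equal to} the set of infinitely-occurring colors, which is the $\SPP$ witness outright. Your construction is the minimal one ($k$ parameters, $k$ graphs, $G_i;G_j=G_{\min(i,j)}$ so the closure is trivial to verify), but as you correctly diagnose it can only extract $\min I$; you then amplify this in first-order arithmetic: the reduction of an arbitrary $\Pi^0_2$ predicate to ``color $i$ occurs infinitely often'' (exactly the coloring from the paper's Lemma equating $\is2$ with $\SPP$) turns your ``least infinitely-occurring color'' into the bounded least-number principle for $\Pi^0_2$ formulas, and the binary-coding step (the greatest $j<2^k$ all of whose $1$-bits satisfy $\chi$ codes $\{i<k:\chi(i)\}$, since $T\subseteq S$ forces $\sum_{i\in T}2^i\le\sum_{i\in S}2^i$) converts that into bounded $\Pi^0_2$-comprehension, hence $\is2$, hence $\SPP$. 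Both arguments check out; yours trades the paper's intricate combinatorial verification (the two Claims about $\mathcal A_x$) for an extra layer of arithmetical bootstrapping and a second invocation of the criterion with $2^k+1$ colors, so the overall exponential blow-up in $k$ is the same, but the graph-theoretic core of your version is a one-line computation. Two cosmetic points: the application of the least-number principle to $\varphi(j)$ itself is redundant (only the reversed form $j\mapsto\varphi(2^k-1-j)$ is needed), and you could shortcut the binary trick by citing that the least-number principle for $\Pi^0_2$ formulas is equivalent to $\mathrm{I}\Pi^0_2$, hence to $\is2$.
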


\begin{proof}
	We show that the SCT criterion for graphs implies $\SPP$.
	
	We first prove the thesis for $k=2$. This serves as an illustration of the general case. Note in fact that $\SPP_k$ is provable in $\RCA$ for every standard $k\in\N$.   
	
	Given $c:\NN \to 2$, we want to show that there exists $I\subseteq 2$ such that $i\in I$ iff  $\exists^\infty x\ c(x)=i$.  Let us define $\mcl G$ as follows. The set $\mcl G$ consists of three size-change graphs $G_0,G_1,G_2$ on parameters $z_0,z_1,z_2$. For $i<3$, the graph $G_i$ has only one strict arc $\arcd{z_i}{z_i}$ and non-strict arcs $\arce{z_j}{z_j}$ for $j>i$. Note that every $G\in\clG$ contains a strict arc $\arcd{z}{z}$. Therefore, by the SCT criterion, every multipath of $\clG$ has an infinite descent. Let
	\[
	g(x)=
	\begin{cases}
	0 &\mbox{if }   c(x)=0 \land c(x+1)=0\\
	1 &\mbox{if }  c(x)=1\land c(x+1)=1\\
	2 & \mbox{otherwise}
	\end{cases}
	\]
	Consider the multipath $M=G_{g(0)},G_{g(1)},\ldots$.  Hence there exists an infinite descent in $\mathcal{M}$.  This implies that there exists a parameter $z_i$  that is strictly decreasing infinitely many times, that is $\arcd{z_i}{z_i}\in G_{g(x)}$, viz.\ $g(x)=i$, for infinitely many $x$. If $i<2$, it means that from some point on $c(x)=i$ and so $I=\{i\}$. If $i=2$, then the color changes infinitely many times and so $I=\{0,1\}$.\smallskip

	General case. Let $c\colon\N\to k$ be a given coloring. 
	Let $\mathcal{I}$ be the set of nonempty subset of $k$ and 
	$\sigma_I\in k^{<\N}$ be a fixed enumeration of $I$ for each $I\in\mathcal{I}$. We say that $\chi\colon \mathcal{I}\to k$ is a \emph{choice function} if $\chi(I)\in I$ for all $I\in\mathcal{I}$. We write $\chi_I$ for $\chi(I)$.
	
	Let $\var(\mcl I)$ consist of parameters $z_I$ for every $I\in\mathcal{I}$.
	Given a choice function $\chi$ and a color $i\in k$, we define a size-change graph $G=G_{\chi,i}$ on  $(\var(\mcl I),\var(\mcl I))$ as follows.  Let 
	\[       \mcl A=\mcl A_{\chi,i}=\{I\in \mathcal{I}\colon  \chi_I=\last(\sigma_I)\text{ and } \sigma_I(0)=i\}. \]
	Note that $\mcl A$ is nonempty. For instance $I=\{i\}\in \mcl A$. 
	Let $m\in\N$ be maximum such that there exists $I\in \mcl A$ with $|I|=m$. Define $G$ by letting
	\begin{itemize}
	 \item $\arcd{z_I}{z_I}\in G$ iff $I\in \mcl A$ and $|I|=m$;
	 \item $\arce{z_I}{z_I}\in G$ iff $I\notin \mcl A$ and $|I|\geq m$. 
	\end{itemize} 
	Let $\mathcal{G}=\{G_{\chi,i}\colon  \chi:\mcl{I}\to k\text{ is a choice function and } i\in k\}$. 
	\begin{claim} 
	Every idempotent graph $G\in\clG$ has an arc $\arcd{z_I}{z_I}$ for some $I$. 
	\end{claim}
        \begin{proof} We show that every graph $G\in\clG$ has a strict arc $\arcd{z_I}{z_I}$ for some $I$. Let $G=G_0;G_1;\ldots;G_{l-1}$ with $G_s\in\mathcal{G}$ for all $s<l$. Let $\mcl A_s$ be the $\mcl A_{\chi,i}$ corresponding to $G_s$. Choose $I\in\bigcup_{s<l} \mcl A_s$ of maximum size. We claim that $\arcd{z_I}{z_I}\in G$. Let $p<l$ be such that $I\in \mcl A_p$. By definition,  $\arcd{z_I}{z_I}\in G_p$. 
	By using the maximality of $I$ it is easy to show that for every $s<l$ either $\arcd{z_I}{z_I}\in G_s$ or  $\arce{z_I}{z_I}\in G_s$. \end{proof}
	
	By primitive recursion we now define a function $\chi\colon\N\times\mcl I\to k$ such that $\chi(x,I)\in I$ for all $x\in\N$ and $I\in\mcl I$. We write $\chi_I(x)$ for $\chi(x,I)$. Let $\chi_I(0)=\sigma_I(0)$ for all $I$. Suppose we have defined $\chi_I(x)$. Let
	\[
	\chi_I(x+1)=
	\begin{cases}  \sigma_I(0) & \mbox{if } |I|=1;\\
	c(x) &\mbox{if }   \chi_I(x)=\sigma_I(i)\text{ and } \sigma_I(i^+)=c(x) \mbox{ for some } i<|I|;\\
	\chi_I(x) & \mbox{otherwise}.
	\end{cases}
	\]
	Here $i^+=i+1 \mod |I|$.
	For all $x\in\N$ let $\chi(x)\colon \mcl I\to k$ be the choice function that maps $I$ to $\chi_I(x)$ and
	\[ \mcl A_x=\mcl A_{\chi(x),c(x)}=\{I\in\mcl I\colon \chi_I(x)=\last(\sigma_I) \text{ and }\sigma_I(0)=c(x)\}.\]
	Define a multipath $M=G_0,G_1,\ldots$ by $G_x=G_{\chi(x),c(x)}$ for every $x\in\N$. 
	\begin{claim}
		For every $I\in\mathcal{I}$ the following are equivalent:
		\begin{enumerate}
			\item[(i)] Every color in $I$ appears infinitely often;
			\item[(ii)] $I\in\mcl A_x$ for infinitely many $x$.
		\end{enumerate}
	\end{claim}
	\begin{proof} 
	        The claim clearly holds for $|I|=1$. Suppose $|I|>1$. 
	        
		(i) implies (ii). Let $s$ be given. We aim to show that for some $x\geq s$ we have that $\chi_I(x)=\last(\sigma_I)$ and $\sigma_I(0)=c(x)$. First we claim that for every $i\in I$ there exists $x\geq s$ such that $\chi_I(x)=i$. Let $i_0<|I|$ be such that $\chi_I(s)=\sigma_I(i_0)$. By $\Sigma^0_1$-induction one can show that for every $j<|I|$ there exists $x\geq s$ such that $\chi_I(x)=\sigma_I(i_j)$, where $i_j=i_0+j\mod |I|$. The thesis follows since every $i\in I$ is of the form $\sigma_I(i_j)$ for some $j<|I|$. This proves that for every $i\in I$ there exists $x\geq s$ such that $\chi_I(x)=i$. Now let $y\geq s$ be such that $\chi_I(y)=\last(\sigma_I)$. Since every color in $I$ appears infinitely often, there exists $x\geq y$ such that $c(x)=\sigma_I(0)$. Let $x\geq y$ be least with this property. Then by $\Sigma^0_0$-induction one can show that $\chi_I(x)=\last(\sigma_I)$. 
		
		It remains to show (ii) implies (i).  The argument is similar to that of (i) implies (ii). Let $i_0+1=|I|$.  By hypothesis we have that $\chi_I(x)=\sigma_I(i_0)$ for infinitely many $x$. Given $s\in\N$, one can show by $\Sigma^0_1$-induction that for every $j<|I|$ there exists $x\geq s$ such that $c(x)=\sigma_I(i_j)$, where $i_j$ is defined as above.  
	\end{proof}
	
	 By the SCT criterion for graphs, we have an infinite descent in $M$ for some parameter $z_I$ starting at some point $t$.  We aim to show that 
	 \[ I=\{ i<k\colon \exists^\infty x\ c(x)=i\}. \] Now, there exist infinitely many $x$ such that $\arcd{z_I}{z_I}\in G_x$, and in particular $I\in\mcl A_x$. By the claim  we have that every color in $I$ appears infinitely many times. We claim that every color in $k\setminus I$ appears finitely many times.
	 Suppose not and let $J\supset I$ be such that every color in $J$ appears infinitely often.  By the claim there exists $x>t$ such that $J\in\mcl A_x$. By definition, in $G_x$ there is no arc from $z_I$ to $z_I$, a contradiction. 
\end{proof}

Therefore we can conclude that $ \STAR \geq \text{SCT criterion} \geq \is2$. Actually we can prove that they are all equivalent. 

\begin{theorem}\label{equiv}
	Over $\RCAo$ the following are equivalent:
	\begin{enumerate}
		\item $\is2$
		\item $\STAR$
		\item SCT criterion
		\item SCT criterion for graphs
	\end{enumerate}
\end{theorem}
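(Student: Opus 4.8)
The plan is to close a cycle of implications using the results already established. Three of the four equivalences come essentially for free: items (3) and (4) are equivalent by the Proposition identifying the SCT criterion with the SCT criterion for graphs; (2)$\Rightarrow$(3) is the Theorem ``$\STAR$ implies the SCT criterion''; and (3)$\Rightarrow$(1) follows by combining Theorem~\ref{reversal} (``the SCT criterion implies $\SPP$'') with the Lemma ``$\is2 \Leftrightarrow \SPP$''. So the only genuinely new implication to establish is (1)$\Rightarrow$(2): that $\is2$ proves $\STAR_k$ for every $k$, uniformly in $k$.

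I would prove this in $\RCAo+\is2$ by contradiction. Fix $c\colon[\N]^2\to k$ and assume $\neg\STAR_k$, i.e. for every $t$ and every $i<k$ there are only finitely many pairs $m<l$ with $c(t,m)=c(t,l)=c(m,l)=i$. Collecting over the boundedly many colours $i<k$ via $\bs2$ (which $\is2$ proves), it follows that for every $t$ there is a $B$ such that there is no monochromatic triangle $(t,m,l)$ with $B\le m<l$. Call a finite strictly increasing sequence $\langle z_0,\dots,z_n\rangle$ \emph{spread out} if $z_0=0$ and for every $i<n$ and every $j\le i$ there is no monochromatic triangle $(z_j,m,l)$ with $z_{i+1}\le m<l$. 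The crucial observation is that ``$\sigma$ codes a spread-out sequence of length $n+1$'' is a $\Pi^0_1$ property of the number $\sigma$ — the only unbounded quantifiers are the $\forall m,l$ — so ``there exists a spread-out sequence of length $n+1$'' is a $\Sigma^0_2$ formula in $n$.

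The core step is then to prove, by $\Sigma^0_2$-induction on $n$ (i.e. by $\is2$), that for every $n$ there is a spread-out sequence of length $n+1$. The base case is $\langle 0\rangle$. For the successor step, given a spread-out $\langle z_0,\dots,z_n\rangle$, apply the bound obtained above to each of $z_0,\dots,z_n$ and take a common upper bound (again bounded collection, $\bs2$) to find $z_{n+1}>z_n$ beyond all of them; then $\langle z_0,\dots,z_n,z_{n+1}\rangle$ is spread out. Now let $r=R_k(3)$ be the finite Ramsey number for monochromatic triangles in $k$ colours — its existence for all $k$ is provable already in $\RCAo$ — and take a spread-out sequence $\langle z_0,\dots,z_{r-1}\rangle$. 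Applying finite Ramsey inside $\{z_0,\dots,z_{r-1}\}$ yields indices $a<b<e<r$ with $c(z_a,z_b)=c(z_a,z_e)=c(z_b,z_e)$; but $z_{a+1}\le z_b<z_e$, so this monochromatic triangle contradicts spread-out-ness (the instance $i=a$, $j=a$). Hence $\STAR_k$ holds, and since the argument is uniform in $k$, $\is2$ proves $\STAR$, closing the cycle.

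The delicate point — and the reason $\is2$, rather than something stronger, suffices — is the complexity accounting in this construction. One must resist building an infinite homogeneous set for $c$ (that would amount to $\RT^2$, far too strong), iterating instead only a \emph{bounded} number of times, and one must check that ``there is a spread-out sequence of length $n$'' stays $\Sigma^0_2$. This hinges on ``spread out'' being $\Pi^0_1$ for a sequence of fixed length, so that the bounded conjunction over the $n$ construction steps does not raise the arithmetical complexity; the hypothesis $\neg\STAR_k$ (together with $\bs2$) is used precisely to guarantee that each one-step extension exists. I expect this bookkeeping, rather than any single combinatorial idea, to be the main obstacle to getting the argument exactly right.
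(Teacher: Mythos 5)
Your reduction of the theorem to the single implication $(1)\Rightarrow(2)$ is exactly the paper's: items (3) and (4) are identified by the earlier proposition, $(2)\Rightarrow(3)$ and $(3)\Rightarrow(1)$ (via $\SPP$) are the previously proved results. For $(1)\Rightarrow(2)$, however, you take a genuinely different route. The paper argues by conservativity: $\RT^2$ trivially implies $\STAR$; by a cited result $\RT^2$ is $\Pi^1_1$-conservative over $\bs3$, so $\bs3$ proves $\STAR$; and since $\bs3$ is $\widetilde{\Pi}^0_4$-conservative over $\is2$ and $\STAR$ is a $\widetilde{\Pi}^0_4$ statement, $\is2$ proves it too. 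That argument is three lines long but imports two nontrivial conservation theorems as black boxes. Your argument is instead a direct combinatorial derivation of $\STAR_k$ inside $\RCAo+\is2$: from $\neg\STAR_k$ you extract, via $\mathrm{B}\Pi^0_1$ (equivalently $\bs2$, which $\is2$ yields), a bound for each apex beyond which it starts no monochromatic triangle; you then run a $\Sigma^0_2$-induction to produce ``spread-out'' sequences of every length, and refute the construction at length $R_k(3)$ using the finite Ramsey theorem, which is available in $\RCAo$. The complexity accounting is the crux and it checks out: being spread-out is $\Pi^0_1$ in the code of the finite sequence, so the induction formula is $\Sigma^0_2$, and both appeals to collection are bounded over $k$ or over the current length, hence instances of $\bs2$. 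Your proof is self-contained and shows exactly where $\Sigma^0_2$-induction is consumed, at the cost of more bookkeeping; the paper's is shorter but tells you less about why $\is2$ suffices.
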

\begin{proof}
	We need only to show that $\is2$ implies $\STAR$. 
	As shown in \cite{SlamanKeita} $\RT^2$ is $\Pi^1_1$-conservative over $\bs3$, the bounding principle for $\Sigma^0_3$-formulas. 
	So, since $\RT^2$ trivially implies $\STAR$ (which is a $\Pi^1_1$-statement), then also $\bs3$ does.  It is known that $\bs3$ is $\widetilde{\Pi}^0_4$-conservative over $\is2$, where a statement is $\widetilde{\Pi}^0_4$ if it is of the form $\forall X \varphi(X)$ and $\varphi(X) \in \Pi^0_4$. This follows as a particular case from the analogue result in first order arithmetic that $\mathsf{B}\Sigma_{n+1}$ is $\Pi_{n+2}$-conservative over $\mathsf{I}\Sigma_n$ for all $n\geq 0$ (see \cite[Chapter IV, Section 1(f)]{Hajek}). Finally, one can check that $\STAR$ is $\widetilde{\Pi}^0_4$, hence the thesis. 
\end{proof}

\begin{remark}
	One can directly show that the P\'{e}ter-Ackermann function  is SCT in both senses. Indeed let $G_{0,1}, G_2$ be the size change graphs of the  P\'{e}ter-Ackermann function as in Example \ref{exampleAck}.
	Let $\mcl M=G'_0, \dots, G'_n, \dots$ be an infinite multipath. We have
	\[ \forall n\ \exists m \geq n\ G'_n=G_{0,1} \vee \exists n\ \forall m\geq n\  G'_n=G_2.       \]
    In the first case we have an infinite descent for $x$ starting in $G'_0$. 
    The second case yields an infinite descent for $y$ starting in some $G_n$, since all graphs in the multipath from $n$ on are $G_2$. Note that this proof is in classical logic, since it requires the Law of Excluded Middle. 
	\end{remark}


In general, if $\mcl G$ has size $k$ for some standard $k\in\omega$, then $\RCA$ proves the SCT criterion for $\mcl G$. This follows from the following:

\begin{proposition}
	For any standard $k \in \omega$, 
	\[
	\RCAo \vdash \STAR_k.
	\]
\end{proposition}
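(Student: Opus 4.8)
The plan is to run an \emph{unrolled} finite construction. The key point is that $k$ is a fixed standard natural number, so a procedure with $k+2$ stages is literally a finite list of steps that $\RCAo$ can perform one after another; by contrast, the same construction carried out for the \emph{variable} $k$ many stages would need $\is2$, which is exactly why the uniform principle $\STAR$ is strictly stronger. Work in $\RCAo$ and fix a coloring $c\colon[\N]^2\to k$ (we may assume $k\ge 1$). Put $B_0=\N$; given an infinite set $B_j$ with least element $t_j=\min B_j$, note that the coloring $m\mapsto c(t_j,m)$ on the infinite set $\{m\in B_j\colon m>t_j\}$ has an infinite color class, since otherwise each of its $k$ color classes is bounded and hence so is their union — but that union is all of $\{m\in B_j\colon m>t_j\}$, contradicting the infinitude of $B_j$. (For standard $k$ this instance of $\RT^1_k$ is immediate in $\RCAo$, as it only invokes that the maximum of $k$ bounds is a bound.) Let $j_j<k$ be least such that $B_{j+1}:=\{m\in B_j\colon m>t_j\wedge c(t_j,m)=j_j\}$ is infinite; this $B_{j+1}$ exists by $\Delta^0_1$-comprehension and is infinite by choice of $j_j$.

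Performing this for $j=0,1,\dots,k$ produces $t_0<t_1<\dots<t_k$, colors $j_0,\dots,j_k<k$, and infinite sets $B_0\supseteq B_1\supseteq\dots\supseteq B_{k+1}$ with the property that, for all $p<q\le k$ and all $m\in B_{q+1}$,
\[ c(t_p,t_q)=j_p,\qquad c(t_p,m)=j_p,\qquad c(t_q,m)=j_q; \]
these are immediate from $t_q\in B_q\subseteq B_{p+1}$, from $B_{q+1}\subseteq B_{p+1}$, and from the defining condition of $B_{p+1}$. Since $j_0,\dots,j_k$ are $k+1$ values in $k$, the finite pigeonhole principle — available in $\RCAo$ — yields $p<q\le k$ with $i:=j_p=j_q$. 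Set $t:=t_p$. Then for every $m\in B_{q+1}$ the pair $\{t_q,m\}$ satisfies $t<t_q<m$ and $c(t,t_q)=c(t,m)=c(t_q,m)=i$, and as $B_{q+1}$ is infinite there are infinitely many such pairs. This is exactly the conclusion of $\STAR_k$.

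The only thing requiring care is checking that nothing in this construction outruns $\RCAo$: each $B_{j+1}$ is $\Delta^0_1$ over $B_j$ together with the numeral $j_j$, and the selection of $j_j$ appeals only to the standard-$k$ pigeonhole, so the whole argument is a fixed finite sequence of $\RCAo$ moves, never iterating an arithmetical operation a variable number of times. This is also the precise reason the proof does not promote to a proof of $\STAR$: there the number of stages is the variable $k$, and uniformly building the sequence $\langle B_j\rangle_{j\le k}$ of infinite sets (with their infinitude) is exactly the $\Sigma^0_2$-flavoured recursion that fails over $\RCAo$. So there is no substantial obstacle here; the ``hard part'' is merely the bookkeeping of the nested sets and the three colour equalities.
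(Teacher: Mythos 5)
Your construction of the nested infinite sets $B_0\supseteq\dots\supseteq B_{k+1}$ and of the colors $j_0,\dots,j_k$ is unobjectionable in $\RCAo$ for standard $k$ (each stage is a finite case split over a $\Pi^0_2$ condition, and there are only standard-finitely many stages), and the finite pigeonhole step is fine. The gap is in what you conclude at the end. Every monochromatic triangle you exhibit has the form $(t_p,t_q,m)$ with $m\in B_{q+1}$, so every witnessing pair is $(m,l)=(t_q,m')$ with its \emph{first coordinate fixed} at $t_q$. This is indeed an infinite set of pairs, but $\STAR_k$ as it is actually formalized and used in this paper (see the proof that $\STAR$ implies the SCT criterion) is
\[
\exists i\,\exists t\,\forall n\,\exists m,l\,\bigl(l>m>n\wedge c(t,m)=c(t,l)=c(m,l)=i\bigr),
\]
i.e.\ the pairs must go to infinity in \emph{both} coordinates; this is exactly what is needed there to extract a witness function $g$ with $t<g_0(n)<g_1(n)<g_0(n+1)$ and hence an infinite descent. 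Your conclusion does not deliver this: for $n\geq t_q$ your argument produces no pair at all. Moreover the natural repair inside your framework --- extend the pre-homogeneous sequence $t_0<t_1<\cdots$ to infinite length so that some color repeats among the $j$'s unboundedly often --- is precisely the step unavailable in $\RCAo$, since it iterates a $\Pi^0_2$ selection an unbounded number of times.

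The way to close the gap is essentially the paper's own argument: once you have an infinite set $B$ on which $c(t,\cdot)$ is constantly $i$ (a single application of $\RT^1_k$ to $c(0,\cdot)$ already gives this with $t=0$, so the whole tower of $B_j$'s is not needed), split by excluded middle on whether there are pairs $m<l$ in $B$ with $m$ arbitrarily large and $c(m,l)=i$. If yes, $t$ and $i$ witness the strong form of $\STAR_k$. If not, then beyond some point $c$ restricted to $B$ omits the color $i$, and one concludes by external induction on the number of colors. So the case distinction on the base pairs, not the pre-homogeneous tree, is the essential idea your proof is missing.
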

\begin{proof}
	Note that $\RCAo\vdash \RT^1_k$ for all standard $k\in\NN$. We prove $\STAR_k$ by (external) induction on $k$.
	
	Given a coloring $c : [\NN]^{2} \to k$, let $c_0 : \NN \to k$ such that $c_0(x)= c(0,x)$ and let $X$ be the infinite homogeneous set given by $\RT^1_k$.  Let $ \bp{x_n : n \in \NN}$ be the increasing enumeration of $X$.  
	Suppose $i = c(0,x_0)$. By the law of excluded middle, we have:
	\[
	\forall n \exists m, l ( l > m > n \wedge c(x_m, x_l) = i) \ \vee \ \exists n \forall m, l ( l > m > n \implies c(x_m, x_l) \neq i).
	\]
	In the first case we are done. In the second case let $Y=\bp{ x \in X : x > x_n}$. Then $Y$ is an infinite homogeneous set in $(k-1)$-many colors. By the induction hypothesis (on $d : [\NN]^{2} \to k-1$ such that $d(a,b) = c(x_{n+a}, x_{n+d})$) we are done again.
\end{proof}

\section{Conclusion and further works}

In this paper we addressed the study of size-change analysis in the context of Reverse Mathematics. We determined the exact strength of the SCT criterion by proving that it is equivalent to a weak version of Ramsey's theorem for pairs, which turns out to be equivalent to $\Sigma^0_2$-induction over $\RCA$. In particular the proof of the SCT criterion does not require full Ramsey's theorem for pairs.  

One of the motivations for studying size-change termination in the framework of Reverse Mathematics is that the P\'{e}ter-Ackermann function is size-change-terminating. Actually, this can be proved in $\RCA$, whereas it is well known that the totality of the P\'{e}ter-Ackermann function is not provable in $\RCA$. This arises the question of what is needed in order to show the \emph{soundness} of size-change termination (SCT soundness), that is the statement that every SCT program terminates. 

The classical proof is based on the fact that ``if a program does not terminate then there exists an infinite state transition sequence''. This statement seems to require  K\"{o}nig's lemma, which is equivalent to Arithmetical Comprehension Axiom ($\ACA$) over the base system $\RCA$. Roughly, $\ACA$ asserts the existence of the jump of every set of natural numbers. 

We suspect that a direct proof of the SCT soundness does not require any comprehension (set existence) axiom.  In fact, it is known that SCT programs compute exactly the \emph{multiply recursive} functions \cite{BenAmram}. On the other hand, the 
class of multiply recursive functions coincides with the class $\mcl M=\bigcup_{\alpha<\omega^\omega}\mcl{F}_{\alpha}$, where $(\mcl{F}_\alpha)_\alpha$ is the fast growing hierarchy \cite{Lob}. Since well-foundedness of $\omega^{\omega^\omega}$ implies the totality of every function in $\mcl M$, we thus conjecture that SCT soundness is provable in $\RCA$ plus well-foundedness of $\omega^{\omega^\omega}$.

\bibliographystyle{plain}
\bibliography{biblio} 
\end{document}